\newtheorem{theorem}{Theorem}[section]
\newtheorem{prop}[theorem]{Proposition}
\newtheorem{lemma}[theorem]{Lemma}
\newtheorem{cor}[theorem]{Corollary}
\def\imod#1{\allowbreak\mkern5mu{\operator@font mod}\,\,#1}
\begin{document}

\title[Hilbert Poincar\'e Series and kernels for products of $L$-functions]{Hilbert Poincar\'e Series and kernels for products of $L$-functions}
\author{Mingkuan Zhang and Yichao Zhang}
\address{School of Mathematics, Harbin Institute of Technology, Harbin 150001, P. R. China}
\email{19b912043@stu.hit.edu.cn}
\address{MOE-LCSM, School of Mathematics and Statistics, Hunan Normal University, Changsha, Hunan 410081, P. R. China}
\email{yichao.zhang@hunnu.edu.cn}
\date{}
\subjclass[2020]{Primary: 11F67, 11F41; Secondary 11F60}
\keywords{Hilbert modular form, Poincar\'e series, Rankin-Cohen bracket, Cohen's kernel, double Eisenstein series}
\thanks{} 

\begin{abstract} We study Hilbert Poincar\'e series associated to general seed functions and construct Cohen's kernels and double Eisenstein series as series of Hilbert Poincar\'e series. Then we calculate the Rankin-Cohen brackets of Hilbert Poincar\'e series and Hilbert modular forms and extend Zagier's kernel formula to totally real number fields. Finally, we show that the Rankin-Cohen brackets of two different types of Eisenstein series are special values of double Eisenstein series up to a constant.
\end{abstract}

\maketitle

\section{Introduction}

Zagier \cite{Z77} proved a formula, known as Zagier's kernel formula, that relates the Petersson inner product of the Rankin-Cohen bracket of a modular form $f$ and an Eisenstein series against a cusp form $g$ to the special value of the Rankin-Selberg convolution of $f$ and $g$, following a method of Rankin in \cite{R52} and generalizing a formula therein. Such a formula is important in Shimura-Eichler-Manin theory, and was used to prove the algebraicity of critical values of $L$-functions \cite{KZ84}. In order to generalize Zagier's kernel formula and reach general $L$-values, Diamantis and O'Sullivan \cite{DO13} introduced double Eisenstein series as kernels for products of $L$-functions. They showed that certain values of the double Eisenstein series are the Rankin-Cohen brackets of two Eisenstein series and gave a new proof of Manin's periods theorem \cite{M73}. More recently, Choie and Zhang \cite{CZ20} extended their results to full level Hilbert modular forms over real quadratic number fields and recovered Shimura's arithmetic result on Hilbert $L$-functions in \cite{S78} when the narrow class number is 1. Note that Rankin's method has also been extended to Jacobi forms by Choie and Kohnen \cite{CK97} and Jacobi forms of several variables by Ramakrishnan and Sahu \cite{RS10}. 

In this paper, for any totally real number field $F$, we first construct Hilbert Poincar\'e series $\mathbb{P}_{\mathbf{k}}(\phi ; z)$ associated to general $\Gamma_\infty$-invariant seed functions $\phi(z)$ of weight $\mathbf{k} \in \mathbb{Z}_{>2}^d$ with $d=\mathrm{deg}\ F$. It defines a Hilbert modular form of weight $\mathbf{k}$ when the Fourier coefficients of $\phi$ grow moderately, and specializes to the usual Hilbert Poincar\'e series $P_{\mu}(z;\mathbf{k},\mathfrak{c},\mathfrak{n})$ and Hilbert Eisenstein series $E_\mathbf{k}(z;\mathfrak{c},\mathfrak{n})$, see Section 3 for more details. Then we prove an identity in Section 4 for the Rankin-Cohen brackets of $P_{\mu}(z;\mathbf{k},\mathfrak{c},\mathfrak{n})$ and a Hilbert modular form $f$ of weight $\mathfrak{l}$, which takes the form $[f, P_{\mu}]_{\mathbf{n}}=\mathbb{P}_{\mathbf{k}+\mathfrak{l}+2\mathbf{n}}(\phi)$ for a concretely constructed seed function $\phi$. A similar formula also holds for Hilbert Eisenstein series and extends the corresponding formula in \cite{CKR07} to arbitrary level. See Theorem \ref{poincare series rankin cohen} and its corollaries for the precise statements. Similar treatment in the elliptic case can be found in \cite{W18}.
 
In Section 5, we construct Cohen's kernel $\mathcal{C}_k(z;s)$ as a series of Poincar\'e series in the parallel weight case, which is absolutely convergent for $Re(s)<-\frac{1}{4}$. By employing Hecke operators on Hilbert Poincar\'e series, we express the double Eisenstein series $E_{s, k-s}(\cdot , w)$ as a series of Poincar\'e series and then prove Zagier's kernel formula for $E_\mathbf{k}(z;\mathfrak{c},\mathfrak{n})$ and another Eisenstein series constructed by Shimura \cite{S78} in Theorem \ref{Zagier's kernel formula}. Finally, we recover Shimura's algebraicity result of Hilbert $L$-functions \cite{S78} in the parallel weight case with trivial central character in Corollary \ref{rationality}.


\section{Preliminaries}
In this section, we briefly introduce the basics on Hilbert modular forms and set up the related notations.

Let $F$ be a totally real number field of degree $d>1$ over $\mathbb{Q}$, with ring of integers $\mathcal{O}=\mathcal{O}_{F}$, different $\mathfrak{d}$, discriminant $D$, distinct real embeddings $\sigma_{1}, \ldots, \sigma_{d}$, and narrow class number $h^+$. For an element $a\in F$, set $a_i=\sigma_i(a)$, the $i$-th component of $a$ under the embedding $F\subset F_\mathbb{R}=F\otimes_\mathbb{Q}\mathbb{R}$. For a subset $S\subset F$, set $S^+$ to be the subset of totally positive elements of $S$, that is,
\[S^+=\{a\in S: a_i>0, i=1,\ldots, d\}.\]
In particular, $\mathcal{O}^{\times +}$ is the set of totally positive units of $\mathcal{O}$. For an abelian group $S$, we denote $S^*=S\backslash\{ 0 \}$. We shall reserve $A=A(F)>1$ for a constant such that each class of $F^\times/\mathcal{O}^{\times+}$ contains a representative $\mu$ such that $A^{-1}N(\mu)^{\frac{1}{d}}\leq |\mu_i|\leq AN(\mu)^{\frac{1}{d}}$ for each $i$.

Let $\mathfrak{c}$ be a fractional ideal, $\mathfrak{n}$ be a nonzero integral ideal of $F$, and $\Gamma_0(\mathfrak{c}, \mathfrak{n})$ be the congruence subgroup of level $\mathfrak{n}$ defined by
$$\Gamma_0(\mathfrak{c}, \mathfrak{n})=\left\{\begin{pmatrix}
a&b\\c&d
\end{pmatrix}: a,d\in\mathcal{O}, b\in (\mathfrak{cd})^{-1}, c\in\mathfrak{cnd}, ad-bc\in\mathcal{O}^{\times +}\right\}.$$
We breifly denote $\Gamma=\Gamma_0(\mathfrak{c}, \mathfrak{n})$.
It is well-known that $\Gamma$ acts discontinuously on $\mathbb{H}^{d}$  with finite covolume via the componentwise M\"obius transformation. For $\mathbf{k}=(k_1,\dots,k_d)$, denote by $M_{\mathbf{k}}\left(\Gamma\right)$ the space of Hilbert modular forms of weight $\mathbf{k}$, level $\Gamma$ with trivial character, i.e. the space of holomorphic functions $f(z)$ on $\mathbb{H}^{d}$ such that for $\gamma=\begin{pmatrix}
a&b\\c&d
\end{pmatrix} \in \Gamma, f(\gamma(z))= \mathrm{det}(\gamma)^{-\frac{\mathbf{k}}{2}}N(c z+d)^{\mathbf{k}} f(z)$. Here for $z=\left(z_{1}, \ldots, z_{d}\right) \in \mathbb{H}^{d}$,
$$\mathrm{det}(\gamma)=(\mathrm{det}(\gamma_1),\dots,\mathrm{det}(\gamma_d)) ,\quad N(c z+d)^{\mathbf{k}}=\prod_{i=1}^{d}\left(c_i z_{i}+d_i\right)^{k_i}.$$
According to Koecher's principle \cite{F90}, each $f$ in $M_{\mathbf{k}}\left(\Gamma\right)$ has the following Fourier expansion at $\infty$
$$f(z)=\sum_{\nu \in \mathfrak{c}^{+}\cup\{0\}} a(\nu) \exp (2 \pi i \operatorname{Tr}(\nu z)),$$
where $\operatorname{Tr}(\nu z)=\sum_{i=1}^{d} \nu_i z_{i} $. The action of $\begin{pmatrix}
\varepsilon&0\\0&1
\end{pmatrix}$ for $\varepsilon\in\mathcal{O}^{\times +}$ shows $a\left(\varepsilon \nu\right)=\varepsilon^{\frac{\mathbf{k}}{2}}a(\nu)$. The Fourier expansion of $f$ at other cusps can be defined and if the constant term $a(0)=0$ at each cusp, $f$ is called a cusp form and the corresponding space is denoted by $S_\mathbf{k}(\Gamma)$. If the weight $\mathbf{k}$ is parallel, we shall also denote $M_{\mathbf{k}}\left(\Gamma\right)$ and $S_{\mathbf{k}}\left(\Gamma\right)$ as $M_k\left(\Gamma\right)$ and $S_k\left(\Gamma\right)$ respectively.  For any $\varepsilon > 0$, we have $a_\mu(f)=O\left(N(\mu)^{\mathbf{k} / 2+\varepsilon}\right)$ for every cusp form $f \in S_\mathbf{k}(\Gamma)$\cite{HP03}. The Petersson inner product on $S_{\mathbf{k}}(\Gamma)$ is defined by
$$\left\langle f, h\right\rangle=\int_{\Gamma \backslash \mathbb{H}^{d}} f(z) \overline{h}(z)N(y)^{\mathbf{k}}d\mu(z),\quad d\mu(z)=\prod_i\frac{d x_{i} d y_{i}}{y_i^{2}}.$$

To introduce Hecke operators, it is necessary to work in the ad\'elic setting. We briefly recall ad\'elic Hilbert modular forms and refer to \cite{S78} for further details. Denote $\hat{\mathcal{O}}=\prod_\mathfrak{p}\mathcal{O}_\mathfrak{p}$, where $\mathfrak p$ runs over all nonzero prime ideals of $\mathcal{O}$. Let $\mathcal{M}_{k}(\mathfrak{n})$ ($\mathcal{S}_{k}(\mathfrak{n})$) be the space of (cuspidal) ad\'elic Hilbert modular forms with trivial central character of parallel weight $k$ with level
$$K_{0}(\mathfrak{n})=\left\{\left(\begin{array}{ll}
a & b \\
c & d
\end{array}\right) \in \mathrm{GL}_{2}(\hat{\mathcal{O}}): c \in \mathfrak{n} \hat{\mathcal{O}}\right\}.$$
Since $K_{0}(\mathfrak{n})$ has determinant $\widehat{\mathcal{O}}^{\times}$, $\mathrm{GL}_{2}(F) \backslash \mathrm{GL}_{2}(\mathbb{A}) / K_{0}(\mathfrak{n}) \mathrm{GL}_{2}^{+}(\mathbb{R})^{d}$ has cardinality $h^{+}$. Let $\{ t_{j} \}_{j =1}^{h^+}$ be finite ideles such that the fractional ideals $\mathfrak{c}_{j}:=t_{j}\mathcal{O}$ form a complete set of representatives of the narrow class group of $F$. Set $\Gamma_{j}=\Gamma_{0}\left(\mathfrak{c}_{j}, \mathfrak{n}\right)$. Shimura proved the following isomorphism between ad\'elic Hilbert modular forms and $h^+$-tuples of classical Hilbert modular forms.

\begin{theorem}[Shimura \cite{S78}]\label{classical and adelic}
For the parallel weight $k$, there exist isomorphisms of complex vector spaces
$$\mathcal{M}_{k}\left(\mathfrak{n}\right) \cong \prod_{j=1}^{h^{+}} M_{k}\left(\Gamma_{j}\right) 
\quad \text { and } \quad 
\mathcal{S}_{k}\left(\mathfrak{n}\right) \cong \prod_{j=1}^{h^{+}} S_{k}\left(\Gamma_{j}\right).$$
\end{theorem}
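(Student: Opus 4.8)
This is Shimura's theorem; I would prove it via the standard adelization/de-adelization dictionary, which rests on strong approximation for $\mathrm{SL}_2$. The starting point is the double coset count already recorded above: since $\det K_0(\mathfrak{n})=\widehat{\mathcal{O}}^\times$, the determinant induces a bijection
\[
\mathrm{GL}_2(F)\backslash\mathrm{GL}_2(\mathbb{A})/K_0(\mathfrak{n})\,\mathrm{GL}_2^+(\mathbb{R})^d\;\xrightarrow{\ \sim\ }\;F^\times\backslash\mathbb{A}^\times/\widehat{\mathcal{O}}^\times F_\mathbb{R}^{\times+},
\]
whose target is the narrow class group, of order $h^+$. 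Fixing finite-idele representatives $x_j$ of the $h^+$ classes, of the shape $\mathrm{diag}(\ast,1)$ built from $t_j$ and a local generator of the different $\mathfrak{d}$, strong approximation gives $\mathrm{GL}_2(\mathbb{A})=\bigsqcup_{j=1}^{h^+}\mathrm{GL}_2(F)\,x_j\,K_0(\mathfrak{n})\,\mathrm{GL}_2^+(\mathbb{R})^d$, and for each fixed $j$ a parametrization of the $j$-th piece by its archimedean factor alone modulo the arithmetic group $\mathrm{GL}_2(F)\cap x_j K_0(\mathfrak{n})\mathrm{GL}_2^+(\mathbb{R})^d x_j^{-1}$, which a direct matrix computation identifies with $\Gamma_j=\Gamma_0(\mathfrak{c}_j,\mathfrak{n})$ exactly --- the conjugation by $x_j$ producing the off-diagonal congruence conditions, including the $\mathfrak{d}$-twist, and the intersection with $\mathrm{GL}_2^+(\mathbb{R})^d$ forcing $ad-bc\in\mathcal{O}^{\times+}$.

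Given $\mathbf{f}\in\mathcal{M}_k(\mathfrak{n})$ I would perform the \emph{de-adelization} place by place: for each $j$ and each $g_\infty\in\mathrm{GL}_2^+(\mathbb{R})^d$ with $g_\infty\langle\mathbf{i}\rangle=z$, where $\mathbf{i}=(i,\dots,i)$, put
\[
f_j(z)=N(\det g_\infty)^{-k/2}\,j(g_\infty,\mathbf{i})^{k}\,\mathbf{f}(x_j g_\infty),
\]
with the conventions $j(g,z)=\prod_i(c_iz_i+d_i)$ and $N(\det g_\infty)=\prod_i\det(g_\infty)_i$. The archimedean weight-$k$ condition on $\mathbf{f}$ --- invariance under the center together with transformation under $\mathrm{SO}(2)^d$ against the character $r_\theta\mapsto\prod_i e^{ik\theta_i}$ --- makes $f_j$ independent of the chosen lift $g_\infty$ of $z$, because the stabilizer of $\mathbf{i}$ in $\mathrm{GL}_2^+(\mathbb{R})^d$ consists of scalars times rotations and the resulting factor is precisely absorbed by the automorphy cocycle. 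Left $\mathrm{GL}_2(F)$-invariance of $\mathbf{f}$ together with the decomposition of the previous paragraph translates into $f_j(\gamma z)=N(cz+d)^k f_j(z)$ for $\gamma=\left(\begin{smallmatrix}a&b\\c&d\end{smallmatrix}\right)\in\Gamma_j$ (the determinant factor being trivial since $\det\gamma\in\mathcal{O}^{\times+}$ has norm $1$), and holomorphy of $f_j$ is the translation of the defining archimedean condition of $\mathcal{M}_k(\mathfrak{n})$ --- that $\mathbf{f}$ restricted to each archimedean $\mathrm{GL}_2(\mathbb{R})$ is a lowest weight vector of the (limit of) holomorphic discrete series of minimal weight $k$, equivalently is killed by the lowering operator there --- which unwinds to $\partial f_j/\partial\bar{z}_i=0$. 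Koecher's principle supplies the Fourier expansion at $\infty$, so $f_j\in M_k(\Gamma_j)$; and for cuspidal $\mathbf{f}$ the ad\'elic cuspidality relation $\int_{F\backslash\mathbb{A}}\mathbf{f}\!\left(\left(\begin{smallmatrix}1&x\\0&1\end{smallmatrix}\right)g\right)dx=0$ translates, cusp by cusp, into the vanishing of all constant terms of the $f_j$ at all cusps of the $\Gamma_j$, using that the cusps of the finitely many $\Gamma_j$ jointly represent $\mathrm{GL}_2(F)\backslash\mathbb{P}^1(F)$ ad\'elically.

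The inverse map sends $(f_j)_j\in\prod_j M_k(\Gamma_j)$ to the $\mathbf{f}$ obtained by reversing the formula above: writing an arbitrary $g\in\mathrm{GL}_2(\mathbb{A})$ as $g=\gamma\,x_j\,u\,g_\infty$ with $\gamma\in\mathrm{GL}_2(F)$, $u\in K_0(\mathfrak{n})$, $g_\infty\in\mathrm{GL}_2^+(\mathbb{R})^d$, set $\mathbf{f}(g)=N(\det g_\infty)^{k/2}\,j(g_\infty,\mathbf{i})^{-k}\,f_j(g_\infty\langle\mathbf{i}\rangle)$. The only ambiguity in such a decomposition is $(\gamma,g_\infty)\mapsto(\gamma\delta^{-1},\delta_\infty g_\infty)$ for $\delta\in\Gamma_j$, with a compensating change of $u$, and the $\Gamma_j$-modularity of $f_j$ is exactly what makes $\mathbf{f}$ well-defined; right $K_0(\mathfrak{n})\mathrm{GL}_2^+(\mathbb{R})^d$-equivariance with the correct archimedean weight character, left $\mathrm{GL}_2(F)$-invariance, moderate growth, and triviality of the central character (matching the trivial nebentypus of the $f_j$) are then built in. The two constructions are manifestly mutually inverse, so they are the asserted linear isomorphisms, and by the cusp-condition match of the previous paragraph they carry cusp forms to cusp forms, yielding the second isomorphism.

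The substance here is bookkeeping rather than ideas, but two points need real care. The first is keeping all automorphy-factor normalizations consistent --- the $\det(\gamma)^{-\mathbf{k}/2}$ twist of our convention and the change of the archimedean factor of automorphy under conjugation by $x_j$ --- and, relatedly, checking that $\mathrm{GL}_2(F)\cap x_j K_0(\mathfrak{n})\mathrm{GL}_2^+(\mathbb{R})^d x_j^{-1}$ really is $\Gamma_0(\mathfrak{c}_j,\mathfrak{n})$, with $b\in(\mathfrak{c}_j\mathfrak{d})^{-1}$, $c\in\mathfrak{c}_j\mathfrak{n}\mathfrak{d}$ and $ad-bc\in\mathcal{O}^{\times+}$. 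The second is the precise equivalence between the archimedean holomorphy condition (phrased via the Casimir eigenvalue and $K_\infty$-type, or via vanishing of a lowering operator) and classical holomorphy with the right growth, which is exactly where Koecher's principle enters, so that no separate condition at the cusps has to be imposed.
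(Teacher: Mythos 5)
Your proposal is correct and is exactly the standard strong-approximation dictionary (double coset count via the determinant, identification of $\mathrm{GL}_2(F)\cap x_jK_0(\mathfrak{n})\mathrm{GL}_2^+(\mathbb{R})^dx_j^{-1}$ with $\Gamma_0(\mathfrak{c}_j,\mathfrak{n})$, and the de-adelization/adelization formulas) that underlies Shimura's theorem. The paper offers no proof of its own, simply citing \cite{S78}, and your argument coincides with that cited one, so there is nothing further to reconcile.
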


Under such isomorphisms, we may write an element $f \in \mathcal{S}_k(\mathfrak{n})$ as $f=\left(f_{j}\right)$ with $f_{j} \in$ $S_k\left(\Gamma_{j}\right)$. The Petersson inner product is defined componentwisely as
$\langle f, h\rangle=\sum_{j}\left\langle f_{j}, h_{j}\right\rangle_{\Gamma_{j}}$.
For each integral ideal $\mathfrak{m}$, assuming that $\mathfrak{m}=\mu \mathfrak{c}_j^{-1}$ with $\mu \in F^{+}$. Set
$c(\mathfrak{m}, f)= N(\mathfrak{c}_j)^{-\frac{k}{2}}a_{j}(\mu)$,
where $a_{j}(\mu)$ is the $\mu$-th Fourier coefficient of $f_{j}$. This is well-defined and called the $\mathfrak{m}$-th Fourier coefficient of $f$.

For each nonzero integral ideal $\mathfrak{m}$, there exists a Hecke operator $T_\mathfrak{m}$ on $\mathcal{M}_k (\mathfrak{n})$. The Hecke operators are multiplicative, commute with each other, and away from the level $\mathfrak{n}$, are also self-adjoint and hence normal. The effect of the Hecke operators on Fourier coefficients is given by
$$c\left(\mathfrak{a}, T_{\mathfrak{m}}f\right)=\sum_{\mathfrak{r} \supset \mathfrak{a}+\mathfrak{m}}\chi_0(\mathfrak{r}) N(\mathfrak{r})^{k-1} c\left(\mathfrak{a} \mathfrak{m} \mathfrak{r}^{-2}, f\right),$$
where $\chi_0(\mathfrak{r})=1$ if $\mathfrak{r}$ is coprime to $\mathfrak{n}$ and $\chi_0(\mathfrak{r})=0$ otherwise. 
There exists a basis $\mathcal{H}_k$ of $\mathcal{S}_{k}\left(\mathfrak{n}\right)$ consisting of normalized cuspidal Hecke eigenforms. We call elements in $\mathcal{H}_k$ `primitive forms'. Here $f$ is normalized if the Fourier coefficient $c(\mathcal{O},f) = 1$. Therefore, for $f \in \mathcal{H}_k$, $T_\mathfrak{m} f = c(\mathfrak{m},f) f $ and $c(\mathfrak{m},f)$ is real  \cite[Section 1.15]{G89}.

The $L$-function associated to $f \in \mathcal{M}_k (\mathfrak{n})$ is
$$L(s, f)=\sum_{\mathfrak{n}} c(\mathfrak{n}, f) N(\mathfrak{n})^{-s}, \quad s \in \mathbb{C},$$
where $\mathfrak{n}$ runs over all non-zero integral ideals of $\mathcal{O}$. This function is absolutely convergent for $\operatorname{Re}(s)>1$. The Rankin-Selberg $L$-function of $f \in \mathcal{M}_k (\mathfrak{n})$ and $g \in \mathcal{M}_l (\mathfrak{n})$ is defined to be
$$L(f\times g, s)=\sum_{\mathfrak{n}} c(\mathfrak{n}, f) \overline{c(\mathfrak{n}, g) }N(\mathfrak{n})^{-s} .$$
If $f$ is a cuspidal eigenform and $g$ is an eigenform, then $L(f\times g, s)$ is absolutely convergent for $\operatorname{Re}(s)>\frac{k+l}{2}$ or $\frac{k-1}{2}+l$ depending on whether $g$ is cuspidal or not \cite{B06}.

\section{Hilbert Poincar\'e series}

Let $\mathbf{k} \in \mathbb{Z}_{>2}^d$ and let $\Gamma_{\infty}=U(\Gamma)Z(\Gamma)N(\Gamma)$ be the parabolic subgroup with
\begin{align*}
U(\Gamma)&=\left\{ \left(\begin{array}{c c}
\varepsilon & 0 \\
0 & 1
\end{array}\right)\ |\ \varepsilon\in \mathcal{O}^{\times +} \right\},\\ Z(\Gamma)&=\left\{\begin{pmatrix}
\varepsilon&0\\0&\varepsilon
\end{pmatrix}| \varepsilon \in \mathcal{O}^\times\right\},\\ N(\Gamma)&=\left\{\begin{pmatrix}
1&b\\0&1
\end{pmatrix}| b \in (\mathfrak{cd})^{-1}\right\}.    
\end{align*}
Let $\phi(z)=\sum_{\nu\in\mathfrak{c}^+\cup\{ 0 \} } a_{\nu}(\phi) e(\nu z)$ be a function on $\mathbb{H}^d$ such that its coefficients $a_\nu(\phi)$ grow slowly enough and that $\phi(z)$ is invariant under $\Gamma_\infty$. The Hilbert Poincar\'e series associated to $\phi$ is defined to be
$$\mathbb{P}_{\mathbf{k}}(\phi ; z)=\sum_{M \in \Gamma_{\infty} \backslash \Gamma} \phi |_{\mathbf{k}} M(z)
=\sum_{(c, d)} \sum_{\varepsilon \in \mathcal{O}^{\times +}}\sum_{\nu\in\mathfrak{c}^+/ \mathcal{O}^{\times +}} a_{\nu}(\phi) \det(\gamma)^{\frac{\mathbf{k}}{2}}  N(c z+d)^{-\mathbf{k}} e(\varepsilon \nu \gamma z).$$
It converges absolutely and uniformly on compact subsets of $\mathbb{H}^d$ and defines a Hilbert modular form of weight $\mathbf{k}$ when a moderate growth condition is satisfied.

\begin{prop}\label{growth condition}
A sufficient condition for the series $\mathbb{P}_{\mathbf{k}}(\phi ; z)$ to converge absolutely and locally uniformly is that  $a_\nu=O\left(N(\nu)^{\frac{\mathbf{k}}{2}-2-\varepsilon}\right)$ for some $\varepsilon \in \mathbb{R}_{>0}$ .
\end{prop}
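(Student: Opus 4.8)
The plan is to bound the defining series term by term in absolute value, to peel off the contribution of the constant term $a_0(\phi)$ (which produces a multiple of a Hilbert Eisenstein series), and to control the remaining part by replacing the seed by a negative power of $\operatorname{Im}(z)$ and then cashing in the resulting cancellation against $N(cz+d)^{-\mathbf{k}}$. First I would take coset representatives $M$ of $\Gamma_\infty\backslash\Gamma$ with bottom row $(c_M,d_M)$ and $\det M=1$; this is legitimate since $U(\Gamma)\subset\Gamma_\infty$ maps isomorphically onto $\mathcal{O}^{\times+}$ under the determinant, so left multiplication by a suitable $\begin{pmatrix}\varepsilon&0\\0&1\end{pmatrix}$ moves any representative into $\{\gamma\in\Gamma:\det\gamma=1\}$. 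For such $M$ one has $\phi|_{\mathbf{k}}M(z)=N(c_Mz+d_M)^{-\mathbf{k}}\phi(Mz)$ and $\operatorname{Im}(Mz)_i=y_i|c_{M,i}z_i+d_{M,i}|^{-2}$, whence
\[
|\mathbb{P}_{\mathbf{k}}(\phi;z)|\ \le\ \sum_{M\in\Gamma_\infty\backslash\Gamma}|N(c_Mz+d_M)|^{-\mathbf{k}}\,|\phi(Mz)| .
\]
The portion of this sum coming from $a_0(\phi)$ is $|a_0(\phi)|\sum_M|N(c_Mz+d_M)|^{-\mathbf{k}}$, a multiple of the Hilbert Eisenstein series of weight $\mathbf{k}$, which converges absolutely and locally uniformly because $k_i>2$ for every $i$.

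Next I would establish the pointwise bound
\[
|\phi(w)|\ \ll_{\phi,\mathbf{k}}\ |a_0(\phi)|+\prod_{i=1}^{d}\max\bigl(1,(\operatorname{Im}w)_i^{-(k_i/2-1-\varepsilon)}\bigr),\qquad w\in\mathbb{H}^{d}.
\]
Put $v=\operatorname{Im}w$. Reading the hypothesis as the multi-index estimate $|a_\nu(\phi)|\ll\prod_i\nu_i^{\,\beta_i}$ with $\beta_i:=k_i/2-2-\varepsilon$ (the natural reading, consistent with the identity $a_{\eta\nu}(\phi)=\eta^{\mathbf{k}/2}a_\nu(\phi)$ that $\Gamma_\infty$-invariance imposes, the exponent $-2-\varepsilon$ being scalar and $N(\eta)=1$), we get
\[
|\phi(w)|-|a_0(\phi)|\ \le\ \sum_{\nu\in\mathfrak{c}^{+}}|a_\nu(\phi)|\,e^{-2\pi\operatorname{Tr}(\nu v)}\ \ll\ \sum_{\nu\in\mathfrak{c}^{+}}\Bigl(\prod_i\nu_i^{\,\beta_i}\Bigr)e^{-2\pi\operatorname{Tr}(\nu v)} .
\]
Since $k_i\ge 3$ we may shrink $\varepsilon$ so that $\beta_i>-1$ for all $i$, and I would then compare this lattice sum with the factoring Gamma integral
\[
\int_{\mathbb{R}_{>0}^{d}}\Bigl(\prod_i\xi_i^{\,\beta_i}\Bigr)e^{-2\pi\operatorname{Tr}(\xi v)}\,d\xi=\prod_i\Gamma(\beta_i+1)\,(2\pi v_i)^{-\beta_i-1},
\]
which yields the displayed bound since $k_i/2-1-\varepsilon=\beta_i+1$.

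Inserting this bound into the first display, writing $\mu_i:=k_i/2-1-\varepsilon$ and using $\bigl(\operatorname{Im}(Mz)_i\bigr)^{-\mu_i}=y_i^{-\mu_i}|c_{M,i}z_i+d_{M,i}|^{2\mu_i}$ together with $\max(1,t)\le 1+t$, I would expand $\prod_i\bigl(1+y_i^{-\mu_i}|c_{M,i}z_i+d_{M,i}|^{2\mu_i}\bigr)$ into $2^{d}$ terms indexed by subsets $S\subseteq\{1,\dots,d\}$. Multiplying the $S$-term by $|N(c_Mz+d_M)|^{-\mathbf{k}}$ and using $2\mu_i-k_i=-2-2\varepsilon$, its sum over $M$ equals $\bigl(\prod_{i\in S}y_i^{-\mu_i}\bigr)$ times
\[
\sum_{M\in\Gamma_\infty\backslash\Gamma}\ \prod_{i\in S}|c_{M,i}z_i+d_{M,i}|^{-2-2\varepsilon}\ \prod_{i\notin S}|c_{M,i}z_i+d_{M,i}|^{-k_i},
\]
a mixed-weight Hilbert Eisenstein series all of whose exponents are strictly less than $-2$, hence absolutely and locally uniformly convergent. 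Summing the $2^{d}$ contributions together with the $a_0(\phi)$-term proves the proposition.

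The step I expect to be the main obstacle is the comparison, uniform in $v$, of $\sum_{\nu\in\mathfrak{c}^{+}}(\prod_i\nu_i^{\,\beta_i})e^{-2\pi\operatorname{Tr}(\nu v)}$ with the Gamma integral: when some $k_i=3$ one has $\beta_i\in(-1,0)$, so the integrand blows up near the coordinate hyperplanes and the naive ``sum $\le$ integral plus supremum'' fails. I would handle this by splitting $\mathfrak{c}^{+}$ into the part with all $\nu_i\ge 1$, where a fundamental-parallelepiped comparison with the integral is routine, and the part where some $\nu_i<1$, where the inequality $|N(\nu)|\ge N(\mathfrak{c})>0$ forces $\nu$ to be large in another coordinate so that the exponential factor dominates; equivalently, grouping $\mathfrak{c}^{+}$ by classes modulo $\mathcal{O}^{\times+}$ with representatives $\nu$ satisfying $A^{-1}N(\nu)^{1/d}\le\nu_i\le AN(\nu)^{1/d}$ reduces it to a convergent sum of exponentials over the unit lattice. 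A similar but easier argument gives the local uniformity in $z$ over compact subsets of $\mathbb{H}^{d}$ of the mixed-weight Eisenstein series appearing above.
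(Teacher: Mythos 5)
The decisive step of your argument—majorizing the series by the $2^d$ ``mixed-weight Eisenstein series'' $\sum_{M\in\Gamma_\infty\backslash\Gamma}\prod_{i\in S}|c_{M,i}z_i+d_{M,i}|^{-2-2\varepsilon}\prod_{i\notin S}|c_{M,i}z_i+d_{M,i}|^{-k_i}$ and declaring them convergent because every exponent exceeds $2$—is exactly where the Hilbert-specific difficulty sits, and as written it is a genuine gap. Even after normalizing $\det M=1$, a coset representative is determined only up to left multiplication by $\begin{pmatrix}u^{-1}&\ast\\0&u\end{pmatrix}$ with $u\in\mathcal{O}^\times$, i.e.\ the bottom row only up to $(c,d)\mapsto(uc,ud)$. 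A genuinely mixed product of the above shape is \emph{not} invariant under this action: it picks up the factor $\prod_{i\notin S}|u_i|^{-(k_i-2-2\varepsilon)}$, which is nontrivial unless the exponents are parallel. Consequently the series you invoke is not a well-defined function of the coset; summed over \emph{all} bottom rows it diverges (already a single unit orbit contributes $\sum_{u\in\mathcal{O}^\times}\prod_i|u_i|^{-s_i}=\infty$ when $s$ is non-parallel, since $\prod_i|u_i|=1$ and the unit group has rank $d-1\ge1$), and summed over one representative per coset its convergence depends on which representatives you pick—an unfortunate choice really does make the majorant diverge, and you must use the same representative in the factor $|N(c_Mz+d_M)|^{-\mathbf{k}}$ and in the bound for $|\phi(Mz)|$. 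The criterion ``all exponents $>2$'' is citable only for $d=1$ or for parallel exponents, so the sentence ``hence absolutely and locally uniformly convergent'' hides precisely the role of the infinite unit group. (A milder instance of the same issue occurs in your $a_0$-term when $\mathbf{k}$ is non-parallel, although there $\Gamma_\infty$-invariance forces $a_0=0$ unless $\varepsilon^{\mathbf{k}/2}=1$ for all $\varepsilon\in\mathcal{O}^{\times+}$.)

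The gap is fixable, and the fix is the one piece of input your sketch never supplies at this step: by the unit theorem choose, for each coset, a representative with $|c_{M,i}z_i+d_{M,i}|\asymp H_M^{1/d}$ for all $i$, where $H_M=\prod_j|c_{M,j}z_j+d_{M,j}|$, with implied constants depending only on $F$ and locally uniform in $z$. Then the $S$-term is $\ll H_M^{-t(S)}\prod_{i\in S}y_i^{-\mu_i}$ with $t(S)=\frac1d\bigl(\sum_{i\in S}(2+2\varepsilon)+\sum_{i\notin S}k_i\bigr)>2$, and $\sum_{M\in\Gamma_\infty\backslash\Gamma}H_M^{-t}$ for real $t>2$ is the parallel-weight (real-analytic) Hilbert Eisenstein series, whose locally uniform convergence is the standard fact you may quote; the same balancing device also completes the lattice-versus-integral comparison in your seed bound, whose sketch is otherwise fine. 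With these repairs your route is legitimate, and it is genuinely different from the paper's: the paper follows Garrett, reducing locally uniform convergence (by positivity and unfolding) to the integral test $\sum_{\nu\in\mathfrak{c}^+/\mathcal{O}^{\times+}}|a_\nu|\int_{(0,1)^d}y^{\frac{\mathbf{k}}{2}-2}e^{-2\pi\operatorname{Tr}(\nu y)}\,dy$, bounding $e^{-2\pi\operatorname{Tr}(\nu y)}\le C\,N(\nu y)^{-s}$ and choosing $\frac{\mathbf{k}}{2}-1-\varepsilon<s<\frac{\mathbf{k}}{2}-1$ with $s-(\frac{\mathbf{k}}{2}-1-\varepsilon)$ parallel so that the $\nu$-sum becomes a convergent norm-zeta sum—that parallelness requirement playing exactly the unit-balancing role that is missing from your Eisenstein-majorant step.
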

\begin{proof}
We apply the same procedure as in \cite[page 53]{G89}. It suffices to prove the convergence for
$$\sum_{\nu\in\mathfrak{c}_+/\mathcal{O}^{\times +}}a_\nu \int_{(0,+\infty)^d}y^{\frac{\mathbf{k}}{2}-2}|e^{-2\pi tr(\nu y)}|dy,$$
whose convergence is the same as
$$\sum_{\nu\in\mathfrak{c}_+/\mathcal{O}^{\times +}}a_\nu \int_{(0,1)^d}y^{\frac{\mathbf{k}}{2}-2}|e^{-2\pi tr(\nu y)}|dy.$$
Since
$$\left|\sum_{\nu\in\mathfrak{c}_+/\mathcal{O}^{\times +}}a_\nu \int_{(0,1)^d}y^{\frac{\mathbf{k}}{2}-2}|e^{-2\pi tr(\nu y)}|dy\right|
\leq \sum_{\nu\in\mathfrak{c}_+/\mathcal{O}^{\times +}} |a_\nu | \int_{(0,1)^d}y^{\frac{\mathbf{k}}{2}-2}\frac{C}{(2\pi N(\nu y))^{s}}dy$$
for any $s\in\mathbb{R}_{>0}^d$ with a constant $C$ depending on $s$. When $a_\nu=O\left(N(\nu)^{\frac{\mathbf{k}}{2}-2-\varepsilon}\right)$, it's easy to verity the absolute convergence by choosing suitable $\frac{\mathbf{k}}{2}-1-\varepsilon < s < \frac{\mathbf{k}}{2}-1$ such that $s-(\frac{\mathbf{k}}{2}-1-\varepsilon)$ is of parallel weight.
\end{proof}

Now we can see some explicit examples. When $\phi(z)=\sum_{\nu\in \mu\mathcal{O}^{\times +}} (\frac{\nu}{\mu})^{\frac{\mathbf{k}}{2}} e(\nu z)$ for $\mu\in\mathfrak{c}^+$ and $\mathbf{k}\in\mathbb{Z}_{>2}^d$, $\mathbb{P}(\phi)=P_{\mu}(z;\mathbf{k},\mathfrak{c},\mathfrak{n})$ converges absolutely and belongs to $S_\mathbf{k}(\Gamma)$, see \cite{G89} for more details. When $\mathbf{k}=(k,\dots,k)$, for $\nu\in\mathfrak{c}^+$, the $\nu$-th coefficient $c_k(\nu,\mu)=c_k(\nu,\mu;\mathfrak{c},\mathfrak{n})$ of $P_{\mu}(z;k, \mathfrak{c},\mathfrak{n})$ equals 
\begin{align*}
\chi_{\mu}(\nu)+ \left(\frac{N(\nu)}{N(\mu)}\right)^{\frac{k-1}{2}}\frac{(2\pi)^d(-1)^{dk/2}N(\mathfrak{cd})}{D^{1 / 2}}
\sum_{\varepsilon\in \mathcal{O}^{\times +}}\sum_{c\in(\mathfrak{cnd}/\mathcal{O}^{\times})^{\ast}}\frac{ S_{c(\mathfrak{cd})^{-1}}(\nu, \varepsilon\mu  ; c)}{|N(c)|}NJ_{k-1} \left(\frac{4 \pi \sqrt{\nu\varepsilon\mu}}{|c|}\right)
\end{align*}
as in \cite{ZZ}. For any $\varepsilon\in\mathcal{O}^{\times+}$ and $\mu\in\mathfrak{c}^+$, it is easy to see that
$P_{\varepsilon\mu}(z ; k, \mathfrak{c},\mathfrak{n})=P_{\mu}(z ; k, \mathfrak{c},\mathfrak{n})$. Therefore, we may assume $\mu\in\mathfrak{c}^+/\mathcal{O}^{\times+}$ and choose representatives $\mu$ such that $A^{-1}N(\mu)^{\frac{1}{d}}\leq \mu_i\leq AN(\mu)^{\frac{1}{d}}$ for each $i$.

When $\phi(z)=1$, $E_\mathbf{k}(z;\mathfrak{c},\mathfrak{n})=\sum_{M \in \Gamma_\infty \backslash \Gamma}\left(\left.1\right|_{\mathbf{k}} M\right)(\tau)$ is the Hilbert Eisenstein series of weight $\mathbf{k}$ on $\Gamma$ if $\mathbf{k}\in\mathbb{Z}_{>2}^d$ (see, for example, \cite{F90}). When $\mathbf{k}=(k,\dots,k)$, it has the Fourier expansion
$$\zeta_P(k)+\frac{(2\pi)^{dk}}{e^{\pi i dk/2}\Gamma(k)^d \sqrt{D}N(\mathfrak{cd})}
\sum_{\nu\in \mathfrak{c}^+} \sum_{c\in\mathfrak{cnd}/\mathcal{O}^\times} N(c)^{-k}   S_{\mathfrak{c}}(\nu,0;c) N(\nu)^{k-1} e(\nu z),$$
where $\zeta_P$ is the partial zeta function associated to principal integral ideals of $F$. In particular, when the narrow class number is one, we have
$$E_k(z;\mathcal{O},\mathfrak{n})=\sum_{(c,d)\in (\mathfrak{nd})\times\mathcal{O}\slash \mathcal{O}^{\times}} N(cz+d)^{-k}
=\zeta_F(k)+\frac{(2 \pi)^{d k}}{e^{\pi i dk/2} \Gamma(k)^d D^{1 / 2}N(\mathfrak{d})^{k}} \sum_{b\in\mathcal{O}^+}\sigma_{k-1}(b)e(nbz),$$
where $n\in\mathfrak{n}$ is a totally positive generator of $\mathfrak{n}$ and $\sigma_{k-1}(b)=\sum_{(b)\subset\mathfrak{a}}N(\mathfrak{a})^{k-1}$.

More generally, if $\phi$ is a Hilbert modular form of parallel weight $\mathbf{k}$, then expanding formally yields
$$\mathbb{P}_{\mathbf{k}+\mathfrak{l}}(\phi ; \tau) =\sum_{M \in \Gamma_{\infty} \backslash \Gamma}N(c \tau+d)^{\mathbf{-k}-\mathfrak{l}} \phi\left(\frac{a \tau+b}{c \tau+d}\right) 
=\sum_{M \in \Gamma_{\infty} \backslash \Gamma}\frac{(c \tau+d)^{\mathbf{k}}}{N(c \tau+d)^{\mathbf{k}+\mathfrak{l}}} \phi(\tau)
=\phi(\tau) E_{\mathfrak{l}}(\tau)$$
when the parallel weight $\mathfrak{l}$ is sufficiently large compared to the growth of the coefficients of $\phi$.

Given a function $\phi(z)$ with coefficients $c_\mu$, one can also consider the series 
$$\mathbb{P}_k^{\prime}(\phi)=c_0 E_\mathbf{k}(z;\mathfrak{c},\mathfrak{n})+\sum_{\mu \in \mathfrak{c}^+ / \mathcal{O}^{\times +}} c_{\mu}P_{\mu}(z;k,\mathfrak{c},\mathfrak{n})$$
for parallel weight $k$, and the non-parallel weight case can be studied similarly. Since $S_k(\Gamma)$ is finite dimensional, the convergence of $\sum_{\mu \in \mathfrak{c}^+ / \mathcal{O}^{\times +}} c_{\mu}P_{\mu}(z;k,\mathfrak{c},\mathfrak{n})$ to a cusp form is equivalent to the convergence of the series
$$\sum_{\mu \in \mathfrak{c}^+ / \mathcal{O}^{\times +}} c_{\mu} \langle f,P_{\mu}(z;k,\mathfrak{c},\mathfrak{n})\rangle=\sum_{\mu \in \mathfrak{c}^+ / \mathcal{O}^{\times +}} c_{\mu} a_\mu(f) N(\mathfrak{c} \mathfrak{d})^{-1} N(\mu)^{1-k} \sqrt{D}\left((4 \pi)^{1-k} \Gamma(k-1)\right)^d$$
for every cusp form $f \in S_k(\Gamma)$. The upper bound $a_\mu(f)=O\left(N(\mu)^{k / 2+\varepsilon}\right)$ implies that this is absolutely convergent when the bound $c_\mu=O\left(N(\mu)^{\frac{k}{2}-2-\varepsilon}\right)$ holds. A similar discussion for elliptic Poincar\'e series can be found in \cite{W18}, where the convergence conditions for $\mathbb{P}_k^{\prime}(\phi)$ and $\mathbb{P}_k(\phi)$ are different.

\section{Rankin-Cohen bracket of Hilbert modular forms}
In this section, we calculate the Rankin-Cohen brackets of Hilbert Poincar\'e series and Hilbert modular forms. Choie, Kim and Richter defined Rankin-Cohen bracket in \cite{CKR07} for general subgroups of $\mathrm{SL}_2(\mathbb{R})^d$, which we recall as follows. Let $f_r: \mathbb{H}^d \rightarrow \mathbb{C}$ be holomorphic and $\mathbf{k}_r \in \mathbb{N}_0^d$ for $r=1,2$. For all $\mathbf{n}=\left(n_1, \ldots, n_d\right) \in \mathbb{N}_0^d$, the $\mathbf{n}$-th Rankin-Cohen bracket is defined as
$$\left[f_1, f_2\right]_{\mathbf{n}}=\sum_{\substack{\mathfrak{l}=(l_j) \\
0 \leqslant l_j \leqslant n_j}}(-1)^{|\mathfrak{l}|}
\binom{\mathbf{k}_1+\mathbf{n}-\mathbf{1}}{\mathbf{n}-\mathfrak{l}}
\binom{\mathbf{k}_2+\mathbf{n}-\mathbf{1}}{\mathfrak{l}}
 f_1^{(\mathfrak{l})}(z) f_2^{(\mathbf{n}-\mathfrak{l})}(z),$$
where $|\mathfrak{l}|=\sum_{j}l_j$. Then for all $M \in \mathrm{SL}_2(\mathbb{R})^d$,
$$\left[\left(\left.f_1\right|_{\mathbf{k}_1} M\right),\left(\left.f_2\right|_{\mathbf{k}_2} M\right)\right]_{\mathbf{n}}=\left.\left[f_1, f_2\right]_{\mathbf{n}}\right|_{\mathbf{k}_1+\mathbf{k}_2+2 \mathbf{n}} M.$$
In particular, if $\Gamma$ is a subgroup of $\mathrm{SL}_2(\mathbb{R})^d$ and $f_r \in M_{\mathbf{k}_r}(\Gamma)$ for $r=1,2$, then $\left[f_1, f_2\right]_{\mathbf{n}} \in M_{\mathbf{k}_1+\mathbf{k}_2+2 \mathbf{n}}(\Gamma)$.

\begin{theorem}\label{poincare series rankin cohen}
Let $\mathbf{n}\in \mathbb{N}_0^d, \mathbf{k}_1 , \mathbf{k}_2 \in \mathbb{Z}_{>2}^d$ and  $\mu\in\mathfrak{c}^+$. For any $f \in M_{\mathbf{k}_1}$, we assume further $\mathbf{k}_2 \geq \mathbf{k}_1+2$ if $f$ is not a cusp form. Then $[f, P_{\mu}(z;\mathbf{k}_2,\mathfrak{c},\mathfrak{n})]_{\mathbf{n}}=\mathbb{P}_{\mathbf{k}_1+\mathbf{k}_2+2\mathbf{n}}(\phi)$, where
$$\phi(z)=e(\mu z)\sum_{\substack{\mathfrak{l} \\ 
0 \leqslant l_j \leqslant n_j}}(-1)^{|\mathfrak{l}|}
\binom{\mathbf{k}_1+\mathbf{n}-\mathbf{1}}{\mathbf{n}-\mathfrak{l}}
\binom{\mathbf{k}_2+\mathbf{n}-\mathbf{1}}{\mathfrak{l}}
 N(2\pi i\mu)^{\mathbf{n}-\mathfrak{l}} f^{(\mathfrak{l})}(z).$$
\end{theorem}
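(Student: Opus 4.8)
The plan is to compute the Rankin--Cohen bracket $[f, P_\mu]_{\mathbf n}$ directly from the definition of the Poincar\'e series, exploiting the covariance property of the bracket under $|_{\mathbf k} M$ that is recalled just above. First I would write $P_\mu(z;\mathbf k_2,\mathfrak c,\mathfrak n) = \mathbb P_{\mathbf k_2}(\psi;z) = \sum_{M\in\Gamma_\infty\backslash\Gamma}\psi|_{\mathbf k_2}M$ with seed $\psi(z) = \sum_{\nu\in\mu\mathcal O^{\times+}}(\nu/\mu)^{\mathbf k_2/2}e(\nu z)$; more conveniently, since the bracket is bilinear and we are going to regroup the sum anyway, I would first justify that $[f,\mathbb P_{\mathbf k_2}(\psi)]_{\mathbf n}$ may be computed term-by-term along the coset sum. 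This is the point where the moderate-growth/convergence hypotheses matter: one needs that the series of derivatives $f^{(\mathfrak l)}\cdot (\psi|_{\mathbf k_2}M)^{(\mathbf n-\mathfrak l)}$ converges absolutely and locally uniformly so that differentiation commutes with $\sum_{M}$. When $f$ is a cusp form the Poincar\'e series already converges with room to spare; when $f$ is merely a modular form, the extra hypothesis $\mathbf k_2\ge \mathbf k_1+2$ is exactly what is needed so that the resulting seed $\phi$ (which acquires the factor $e(\mu z)f^{(\mathfrak l)}(z)$, hence coefficients of size $O(N(\nu)^{(\mathbf k_1+\mathbf k_2+2\mathbf n)/2-2-\varepsilon})$ once $\mathbf k_2$ dominates $\mathbf k_1$) satisfies Proposition \ref{growth condition}; I would spell this bookkeeping out.

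Next, using $f\in M_{\mathbf k_1}(\Gamma)$, so $f = f|_{\mathbf k_1}M$ for every $M\in\Gamma$, and the covariance identity, each term becomes
$$
[f,\ \psi|_{\mathbf k_2}M]_{\mathbf n}
= [\,f|_{\mathbf k_1}M,\ \psi|_{\mathbf k_2}M\,]_{\mathbf n}
= [f,\psi]_{\mathbf n}\big|_{\mathbf k_1+\mathbf k_2+2\mathbf n}M .
$$
Summing over $M\in\Gamma_\infty\backslash\Gamma$ gives $[f,P_\mu]_{\mathbf n} = \mathbb P_{\mathbf k_1+\mathbf k_2+2\mathbf n}([f,\psi]_{\mathbf n})$, provided $[f,\psi]_{\mathbf n}$ is $\Gamma_\infty$-invariant, which I would check from the explicit shape of $\psi$ (the $U(\Gamma)$-action is handled by the $(\nu/\mu)^{\mathbf k_2/2}$ normalization, exactly as in the invariance of $P_\mu$ itself, and translations and $Z(\Gamma)$ are immediate). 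So the whole problem reduces to identifying the seed $[f,\psi]_{\mathbf n}$ with the stated $\phi$.

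Finally I would carry out that identification. Since the $\mathfrak l$-th partial derivative operator acts on $e(\nu z)$ by the scalar $N(2\pi i\nu)^{\mathfrak l}$ componentwise, we get $\psi^{(\mathbf n-\mathfrak l)}(z) = \sum_{\nu\in\mu\mathcal O^{\times+}}(\nu/\mu)^{\mathbf k_2/2}N(2\pi i\nu)^{\mathbf n-\mathfrak l}e(\nu z)$; plugging into the definition of the bracket and interchanging the finite sum over $\mathfrak l$ with the sum over $\nu$,
$$
[f,\psi]_{\mathbf n}(z)
=\sum_{\nu\in\mu\mathcal O^{\times+}}\Big(\tfrac{\nu}{\mu}\Big)^{\!\mathbf k_2/2}
\sum_{0\le l_j\le n_j}(-1)^{|\mathfrak l|}
\binom{\mathbf k_1+\mathbf n-\mathbf 1}{\mathbf n-\mathfrak l}
\binom{\mathbf k_2+\mathbf n-\mathbf 1}{\mathfrak l}
N(2\pi i\nu)^{\mathbf n-\mathfrak l}f^{(\mathfrak l)}(z)\,e(\nu z).
$$
Comparing with the claimed $\phi$, which is supported at $\nu=\mu$ with the factor $e(\mu z)N(2\pi i\mu)^{\mathbf n-\mathfrak l}$: this looks like a genuine discrepancy, but it is resolved exactly as the equality $P_{\varepsilon\mu}=P_\mu$ is — summing $\psi|_{\mathbf k_2}M$ over $\Gamma_\infty\backslash\Gamma$ already incorporates the sum over $\varepsilon\in\mathcal O^{\times+}$, so one is free to take as seed the single term $\nu=\mu$ of $\psi$, i.e. $\psi_0(z)=e(\mu z)$ (or equivalently to fold the $\mathcal O^{\times+}$-sum back into the coset sum). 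With that normalization of the seed, $\psi_0^{(\mathbf n-\mathfrak l)}(z)=N(2\pi i\mu)^{\mathbf n-\mathfrak l}e(\mu z)$ and $[f,\psi_0]_{\mathbf n}$ is precisely $\phi$ as stated. I would therefore organize the write-up so that $P_\mu$ is presented from the start as $\mathbb P_{\mathbf k_2}(e(\mu z))$ up to the unit-sum reindexing, making the last comparison a one-line substitution.

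I expect the main obstacle to be not the algebra — which is bilinearity plus the covariance identity plus $\partial e(\nu z)=2\pi i\nu\, e(\nu z)$ — but the analytic justification for differentiating the Poincar\'e series termwise and for the convergence of the resulting series $\mathbb P_{\mathbf k_1+\mathbf k_2+2\mathbf n}(\phi)$; that is where the hypotheses $\mathbf k_r\in\mathbb Z_{>2}^d$ and $\mathbf k_2\ge\mathbf k_1+2$ (in the non-cuspidal case) get used, via Proposition \ref{growth condition} and the standard Hecke bound $a_\mu(f)=O(N(\mu)^{\mathbf k_1/2+\varepsilon})$ for the cusp-form case together with the Fourier growth $O(N(\mu)^{\mathbf k_1/2})$ coming from Eisenstein parts otherwise.
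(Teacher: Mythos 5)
Your proposal is correct and follows essentially the same route as the paper, which omits the argument by citing the elliptic case \cite[Theorem 2]{W18}: bilinearity plus the covariance $[f|_{\mathbf{k}_1}M,\psi|_{\mathbf{k}_2}M]_{\mathbf{n}}=[f,\psi]_{\mathbf{n}}|_{\mathbf{k}_1+\mathbf{k}_2+2\mathbf{n}}M$ applied termwise along $\Gamma_\infty\backslash\Gamma$, identification of the seed $[f,e(\mu\,\cdot)]_{\mathbf{n}}$ with the stated $\phi$, and convergence bookkeeping via Proposition \ref{growth condition}. Your handling of the unit-sum normalization (folding $\sum_{\varepsilon\in\mathcal{O}^{\times+}}$ into the coset sum so the seed is just $e(\mu z)$, exactly as in $P_{\varepsilon\mu}=P_\mu$) is the right adaptation of the elliptic argument to the Hilbert setting.
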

\begin{proof}
The proof proceeds the same as in the elliptic case \cite[Theorem 2]{W18}, so we omit it here.
\end{proof}
Since Rankin-Cohen brackets are bilinear, we see that the Rankin-Cohen brackets and Poincar\'e averaging `commute' in the following sense:
\begin{cor}
Let $f$ be a Hilbert modular form of weight $\mathbf{k}$ and $\phi$ be a q-series whose coefficients have upper bound $O\left(N(\nu)^{\frac{\mathfrak{l}}{2}-2-\varepsilon}\right)$ for some $\varepsilon>0$. If $f$ is not a cusp form, we assume further $\mathfrak{l} \geq \mathbf{k}+2$. Then
$$\left[f, \mathbb{P}_{\mathfrak{l}}(\phi)\right]_{\mathbf{n}}=\mathbb{P}_{\mathbf{k}+\mathfrak{l}+2\mathbf{n}}\left([f, \phi]_{\mathbf{n}}\right) .$$
Here we denote formally $[f, \phi]_{\mathbf{n}}$ as the $\mathbf{n}$-th Rankin-Cohen bracket of $f$ and $\phi$, where $\phi$ is treated like a modular form of weight $\mathfrak{l}$.
\end{cor}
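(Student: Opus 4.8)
The plan is to use the bilinearity of the Rankin--Cohen bracket to push the operator $[f,\cdot]_{\mathbf{n}}$ through the $\Gamma_\infty\backslash\Gamma$-average defining $\mathbb{P}_{\mathfrak{l}}(\phi)$, and then to undo that average by the covariance identity recalled before the theorem together with the modularity of $f$. By Proposition~\ref{growth condition} the series $\mathbb{P}_{\mathfrak{l}}(\phi;z)=\sum_{M\in\Gamma_\infty\backslash\Gamma}\phi|_{\mathfrak{l}}M(z)$ converges absolutely and locally uniformly, so by the Weierstrass convergence theorem each termwise partial derivative converges locally uniformly to the corresponding derivative of $\mathbb{P}_{\mathfrak{l}}(\phi)$. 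Since $g\mapsto[f,g]_{\mathbf{n}}$ is a $\mathbb{C}$-linear operator --- a finite $\mathbb{C}$-linear combination of the maps $g\mapsto f^{(\mathfrak{l}')}g^{(\mathbf{n}-\mathfrak{l}')}$ with fixed holomorphic coefficient $f^{(\mathfrak{l}')}$ --- it may be applied term by term, giving
\[
[f,\mathbb{P}_{\mathfrak{l}}(\phi)]_{\mathbf{n}}=\sum_{M\in\Gamma_\infty\backslash\Gamma}[f,\phi|_{\mathfrak{l}}M]_{\mathbf{n}}.
\]

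Next I would fix a representative $M\in\Gamma$ of each coset. Since $f\in M_{\mathbf{k}}(\Gamma)$ we have $f=f|_{\mathbf{k}}M$, and hence
\[
[f,\phi|_{\mathfrak{l}}M]_{\mathbf{n}}=\bigl[f|_{\mathbf{k}}M,\phi|_{\mathfrak{l}}M\bigr]_{\mathbf{n}}=[f,\phi]_{\mathbf{n}}\big|_{\mathbf{k}+\mathfrak{l}+2\mathbf{n}}M
\]
by the covariance of the bracket (extended to $\mathrm{GL}_2^+(\mathbb{R})^d$ via the $\det$-normalized slash, exactly as for $P_\mu$), where $[f,\phi]_{\mathbf{n}}$ is the $q$-series obtained by treating $\phi$ as a form of weight $\mathfrak{l}$. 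Summing over $\Gamma_\infty\backslash\Gamma$ then yields $[f,\mathbb{P}_{\mathfrak{l}}(\phi)]_{\mathbf{n}}=\sum_{M}[f,\phi]_{\mathbf{n}}|_{\mathbf{k}+\mathfrak{l}+2\mathbf{n}}M=\mathbb{P}_{\mathbf{k}+\mathfrak{l}+2\mathbf{n}}([f,\phi]_{\mathbf{n}})$, once one checks that $[f,\phi]_{\mathbf{n}}$ is a legitimate seed function in weight $\mathbf{k}+\mathfrak{l}+2\mathbf{n}$: its Fourier expansion is visibly supported on $\mathfrak{c}^+\cup\{0\}$, and its $\Gamma_\infty$-invariance follows from the same covariance, since for $\gamma\in\Gamma_\infty$ one has $[f,\phi]_{\mathbf{n}}|_{\mathbf{k}+\mathfrak{l}+2\mathbf{n}}\gamma=[f|_{\mathbf{k}}\gamma,\phi|_{\mathfrak{l}}\gamma]_{\mathbf{n}}=[f,\phi]_{\mathbf{n}}$, using $f|_{\mathbf{k}}\gamma=f$ and the standing $\Gamma_\infty$-invariance of $\phi$. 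Equivalently, one can first split $\mathbb{P}_{\mathfrak{l}}(\phi)=a_0(\phi)E_{\mathfrak{l}}(z;\mathfrak{c},\mathfrak{n})+\sum_{\mu\in\mathfrak{c}^+/\mathcal{O}^{\times+}}a_\mu(\phi)P_\mu(z;\mathfrak{l},\mathfrak{c},\mathfrak{n})$, apply Theorem~\ref{poincare series rankin cohen} to each $[f,P_\mu]_{\mathbf{n}}$ and its $\phi=1$ specialization to $[f,E_{\mathfrak{l}}]_{\mathbf{n}}$, and reassemble; identifying the resulting seed with $[f,\phi]_{\mathbf{n}}$ is a routine reindexing using the unit-equivariance $a_{\varepsilon\mu}(\phi)=\varepsilon^{\mathfrak{l}/2}a_\mu(\phi)$ forced by $U(\Gamma)$-invariance.

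The one genuinely non-formal point, and the step I expect to be the main obstacle, is checking that the seed $[f,\phi]_{\mathbf{n}}$ satisfies the growth condition of Proposition~\ref{growth condition} in weight $\mathbf{k}+\mathfrak{l}+2\mathbf{n}$, i.e.\ that its $\nu$-th coefficient is $O(N(\nu)^{(\mathbf{k}+\mathfrak{l}+2\mathbf{n})/2-2-\varepsilon'})$; only then is $\mathbb{P}_{\mathbf{k}+\mathfrak{l}+2\mathbf{n}}([f,\phi]_{\mathbf{n}})$ an honest Hilbert modular form, so that the interchanges above are justified by absolute and locally uniform convergence rather than being merely formal. By construction these coefficients are finite convolutions $\sum_{\nu_1+\nu_2=\nu}a_{\nu_1}(f)\,a_{\nu_2}(\phi)$ times explicit polynomial factors in $\nu$ coming from the $N(2\pi i\,\cdot)^{\mathbf{n}-\mathfrak{l}'}$ and the binomials; combining $a_{\nu_2}(\phi)=O(N(\nu_2)^{\mathfrak{l}/2-2-\varepsilon})$ with the cusp bound $a_{\nu_1}(f)=O(N(\nu_1)^{\mathbf{k}/2+\varepsilon})$, or with the weaker bound $a_{\nu_1}(f)=O(N(\nu_1)^{\mathbf{k}-\mathbf{1}+\varepsilon})$ valid for a general Hilbert modular form, produces the required estimate --- and it is precisely in the non-cuspidal case that one needs the extra room supplied by the hypothesis $\mathfrak{l}\geq\mathbf{k}+2$, exactly as in Theorem~\ref{poincare series rankin cohen}. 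This estimate is the totally real analogue of the bound carried out in \cite[Theorem~2]{W18} for the seed of $[f,P_\mu]_{\mathbf{n}}$, and everything else is the formal bracket manipulation above.
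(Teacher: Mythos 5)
Your main manipulation is correct and is in substance the paper's own route: the paper obtains this corollary from Theorem \ref{poincare series rankin cohen} by (countable) bilinearity, and the proof of that theorem is exactly the unfolding you carry out directly --- termwise differentiation of $\mathbb{P}_{\mathfrak{l}}(\phi)=\sum_{M}\phi|_{\mathfrak{l}}M$ justified by Proposition \ref{growth condition} applied to $\phi$ in weight $\mathfrak{l}$ together with Weierstrass, then the covariance of the bracket and $f|_{\mathbf{k}}M=f$ for $M\in\Gamma$, as in \cite[Theorem 2]{W18}.

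The problem is your closing paragraph, which you single out as the crux: it is both unnecessary and incorrect as stated. Unnecessary, because the interchange needs only the absolute, locally uniform convergence of $\mathbb{P}_{\mathfrak{l}}(\phi)$ (the hypothesis on $\phi$) plus Weierstrass; the convergence of $\sum_{M}[f,\phi]_{\mathbf{n}}|_{\mathbf{k}+\mathfrak{l}+2\mathbf{n}}M$ is then inherited term by term from the left-hand side, so the new seed is not required to meet the threshold of Proposition \ref{growth condition}. Incorrect, because the claimed estimate does not hold: the $\nu$-th coefficient of $[f,\phi]_{\mathbf{n}}$ is a convolution over the roughly $N(\nu)$ decompositions $\nu=\nu_1+\nu_2$ with $\nu_1,\nu_2\in\mathfrak{c}^+\cup\{0\}$ (for representatives $\nu$ with all components comparable to $N(\nu)^{1/d}$), so with $a_{\nu_1}(f)=O\left(N(\nu_1)^{\mathbf{k}/2+\varepsilon}\right)$ and $a_{\nu_2}(\phi)=O\left(N(\nu_2)^{\mathfrak{l}/2-2-\varepsilon}\right)$ one only gets $O\left(N(\nu)^{(\mathbf{k}+\mathfrak{l}+2\mathbf{n})/2-1+\varepsilon}\right)$, a full power of $N(\nu)$ above the bound $O\left(N(\nu)^{(\mathbf{k}+\mathfrak{l}+2\mathbf{n})/2-2-\varepsilon'}\right)$ you assert, and this bound is essentially sharp (take coefficients of the typical size with fixed sign), so no refinement of the same computation can rescue it; in the non-cuspidal case, with $a_{\nu_1}(f)=O\left(N(\nu_1)^{\mathbf{k}-\mathbf{1}+\varepsilon}\right)$, the shortfall is even larger and is not repaired by $\mathfrak{l}\geq\mathbf{k}+2$. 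Indeed, already the seed of Theorem \ref{poincare series rankin cohen} need not satisfy Proposition \ref{growth condition} when $\mathbf{k}_2$ is small, which shows that criterion is a sufficient condition not being invoked here. The correct role of the hypothesis $\mathfrak{l}\geq\mathbf{k}+2$ is simply the one it plays in Theorem \ref{poincare series rankin cohen} (equivalently in \cite[Theorem 2]{W18}) when $f$ is not cuspidal. If you delete the final paragraph and instead observe that the right-hand series converges because it coincides term by term with the convergent series produced from the left-hand side, your argument is complete.
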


This expression simplifies considerably for the Eisenstein series to be $\left[f, E_{\mathfrak{l}}\right]_{\mathbf{n}}=\mathbb{P}_{\mathbf{k}+\mathfrak{l}+2\mathbf{n}}(\phi)$ with $\phi=(-1)^{|\mathbf{n}|}
\binom{\mathfrak{l}+\mathbf{n}-\mathbf{1}}{\mathbf{n}}
 f^{(\mathbf{n})}$.

\begin{cor}\label{Eisenstein series rankin cohen}
Let $E_{\mathbf{k}}(z;\mathfrak{c},\mathfrak{n})$ be the Hilbert Eisenstein series and $g_{\mathfrak{l}}(z)=\sum_{\mu \in \mathfrak{c}^+ \cup \{ 0 \}} b_{\nu} e(\nu z)  \in M_{\mathfrak{l}}(\Gamma)$. Then, for all $\mathbf{n} \in \mathbb{N}^{d}$, we have
$$\left[E_{\mathbf{k}}(z;\mathfrak{c},\mathfrak{n}), g_{\mathfrak{l}}\right]_{\mathbf{n}}=(-1)^{|\mathbf{n}|}
\binom{\mathbf{k}+\mathbf{n}-\mathbf{1}}{\mathbf{n}}
 \sum_{\nu \in \mathfrak{c}^+\slash\mathcal{O}^{\times +}}(2 \pi i \nu)^{\mathbf{n}} b_{\nu} P_{\nu}(z;\mathbf{k}+\mathfrak{l}+2\mathbf{n},\mathfrak{c},\mathfrak{n}).$$
\end{cor}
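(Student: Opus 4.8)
The plan is to apply Theorem \ref{poincare series rankin cohen} with the roles reversed, taking $f = g_{\mathfrak{l}} \in M_{\mathfrak{l}}(\Gamma)$ and the Poincaré series to be $E_{\mathbf{k}}(z;\mathfrak{c},\mathfrak{n})$, and then to interpret the resulting seed function as a linear combination of the seed functions defining the Poincaré series $P_{\nu}$. First I would note that $E_{\mathbf{k}}(z;\mathfrak{c},\mathfrak{n}) = \mathbb{P}_{\mathbf{k}}(\mathbf{1})$ is the Hilbert Poincaré series attached to the constant seed $\phi = 1$, i.e. the case $\mu = 0$, and that the Corollary preceding this statement already records $[f, E_{\mathfrak{l}}]_{\mathbf{n}} = \mathbb{P}_{\mathbf{k}+\mathfrak{l}+2\mathbf{n}}(\phi)$ with $\phi = (-1)^{|\mathbf{n}|}\binom{\mathfrak{l}+\mathbf{n}-\mathbf{1}}{\mathbf{n}} f^{(\mathbf{n})}$; applying this with $f$ replaced by $g_{\mathfrak{l}}$, weight $\mathfrak{l}$ replaced by $\mathbf{k}$, and weight $\mathbf{k}$ replaced by $\mathfrak{l}$, together with the antisymmetry $[E_{\mathbf{k}}, g_{\mathfrak{l}}]_{\mathbf{n}} = (-1)^{|\mathbf{n}|}[g_{\mathfrak{l}}, E_{\mathbf{k}}]_{\mathbf{n}}$ of the Rankin-Cohen bracket, gives
$$[E_{\mathbf{k}}(z;\mathfrak{c},\mathfrak{n}), g_{\mathfrak{l}}]_{\mathbf{n}} = (-1)^{|\mathbf{n}|}\binom{\mathbf{k}+\mathbf{n}-\mathbf{1}}{\mathbf{n}}\,\mathbb{P}_{\mathbf{k}+\mathfrak{l}+2\mathbf{n}}\bigl(g_{\mathfrak{l}}^{(\mathbf{n})}\bigr).$$

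The remaining step is to identify $\mathbb{P}_{\mathbf{k}+\mathfrak{l}+2\mathbf{n}}(g_{\mathfrak{l}}^{(\mathbf{n})})$ with $\sum_{\nu}(2\pi i\nu)^{\mathbf{n}} b_\nu P_\nu(z;\mathbf{k}+\mathfrak{l}+2\mathbf{n},\mathfrak{c},\mathfrak{n})$. Here I would differentiate the Fourier expansion $g_{\mathfrak{l}}(z) = \sum_{\nu} b_\nu e(\nu z)$ termwise to get $g_{\mathfrak{l}}^{(\mathbf{n})}(z) = \sum_{\nu}(2\pi i\nu)^{\mathbf{n}} b_\nu e(\nu z)$, where the $\nu = 0$ term drops out since $\mathbf{n} \in \mathbb{N}^d$ has all positive entries. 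Then by the linearity of $\phi \mapsto \mathbb{P}_{\mathbf{m}}(\phi)$ in the seed and the fact (recalled in Section 3) that the Poincaré series attached to the seed $e(\nu z)$ — up to grouping $\nu$-orbits under $\mathcal{O}^{\times+}$ and the normalizing factor $(\nu/\mu)^{\mathbf{k}/2}$ which equals $1$ on the orbit of a chosen representative — is precisely $P_\nu$, one gets the claimed sum over $\nu \in \mathfrak{c}^+/\mathcal{O}^{\times+}$. One should check that the $\mathcal{O}^{\times+}$-invariance $b_{\varepsilon\nu} = \varepsilon^{\mathfrak{l}/2} b_\nu$ and $P_{\varepsilon\nu} = P_\nu$ combine correctly with the weight-$(\mathbf{k}+\mathfrak{l}+2\mathbf{n})$ normalization so that the coefficients $(2\pi i\nu)^{\mathbf{n}} b_\nu$ pair consistently with the orbit representatives, which is routine.

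The main obstacle — and it is mild — is verifying that the hypotheses needed to invoke Theorem \ref{poincare series rankin cohen} hold: one needs the seed $g_{\mathfrak{l}}^{(\mathbf{n})}$ to satisfy the growth bound of Proposition \ref{growth condition}, i.e. its coefficients $(2\pi i\nu)^{\mathbf{n}} b_\nu$ must be $O(N(\nu)^{(\mathbf{k}+\mathfrak{l}+2\mathbf{n})/2 - 2 - \varepsilon})$. Since $g_{\mathfrak{l}}$ is a Hilbert modular form (not necessarily cuspidal), its coefficients satisfy $b_\nu = O(N(\nu)^{\mathfrak{l}-1+\varepsilon})$, so $(2\pi i\nu)^{\mathbf{n}} b_\nu = O(N(\nu)^{|\mathbf{n}|/d \cdot \text{(componentwise)} + \mathfrak{l} - 1 + \varepsilon})$; comparing with the required exponent $(\mathbf{k}+\mathfrak{l})/2 + \mathbf{n} - 2$ reduces to $\mathbf{k}/2 \geq \mathfrak{l}/2 + 1$, i.e. the condition $\mathbf{k} \geq \mathfrak{l} + 2$ when $g_{\mathfrak{l}}$ is not cuspidal, matching the hypothesis imposed in the preceding corollary; when $g_{\mathfrak{l}}$ is cuspidal the sharper bound $b_\nu = O(N(\nu)^{\mathfrak{l}/2+\varepsilon})$ makes convergence automatic. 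Thus the statement follows by combining the bilinearity of the bracket, the explicit seed formula of Theorem \ref{poincare series rankin cohen} (equivalently its Eisenstein specialization), and the identification of seeds $e(\nu z)$ with the cuspidal Poincaré series $P_\nu$.
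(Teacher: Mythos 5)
Your overall route is the intended one (the paper gives no separate proof; the corollary is meant to fall out of the Eisenstein specialization of Theorem \ref{poincare series rankin cohen}, linearity in the seed, termwise differentiation of the Fourier expansion, and the identification of the seed $e(\nu z)$ with $P_\nu$), and your convergence discussion is if anything more careful than the paper's. The problem is the sign bookkeeping in your first display: you invoke \emph{two} sign-producing facts --- the antisymmetry $[E_{\mathbf{k}},g_{\mathfrak{l}}]_{\mathbf{n}}=(-1)^{|\mathbf{n}|}[g_{\mathfrak{l}},E_{\mathbf{k}}]_{\mathbf{n}}$ and the remark, which applied to $[g_{\mathfrak{l}},E_{\mathbf{k}}]_{\mathbf{n}}$ already yields the seed $(-1)^{|\mathbf{n}|}\binom{\mathbf{k}+\mathbf{n}-\mathbf{1}}{\mathbf{n}}g_{\mathfrak{l}}^{(\mathbf{n})}$ --- and these two factors of $(-1)^{|\mathbf{n}|}$ cancel. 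Carried out consistently, your chain gives $[E_{\mathbf{k}},g_{\mathfrak{l}}]_{\mathbf{n}}=\binom{\mathbf{k}+\mathbf{n}-\mathbf{1}}{\mathbf{n}}\,\mathbb{P}_{\mathbf{k}+\mathfrak{l}+2\mathbf{n}}\bigl(g_{\mathfrak{l}}^{(\mathbf{n})}\bigr)$ with no sign, as one also sees directly by unfolding $E_{\mathbf{k}}=\mathbb{P}_{\mathbf{k}}(1)$ and computing the seed $[1,g_{\mathfrak{l}}]_{\mathbf{n}}$: with the paper's bracket convention only the term $\mathfrak{l}=0$ survives, with coefficient $+\binom{\mathbf{k}+\mathbf{n}-\mathbf{1}}{\mathbf{n}}$, since the sign $(-1)^{|\mathfrak{l}|}$ is attached to the derivative order of the \emph{first} argument. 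So your displayed equation does not follow from the steps you describe; one $(-1)^{|\mathbf{n}|}$ has been silently dropped in order to land on the printed formula.

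This matters because it exposes an order-of-arguments/sign discrepancy in the statement itself: under the paper's definition of the bracket, the stated right-hand side is exactly $[g_{\mathfrak{l}},E_{\mathbf{k}}]_{\mathbf{n}}$ (apply the remark with $f=g_{\mathfrak{l}}$ and Eisenstein weight $\mathbf{k}$), while $[E_{\mathbf{k}},g_{\mathfrak{l}}]_{\mathbf{n}}$ equals the same expression without the factor $(-1)^{|\mathbf{n}|}$. A correct write-up should either prove the identity for the bracket taken in the order $[g_{\mathfrak{l}},E_{\mathbf{k}}]_{\mathbf{n}}$, or keep the order $[E_{\mathbf{k}},g_{\mathfrak{l}}]_{\mathbf{n}}$ and delete the sign; it cannot assert both the antisymmetry step and the remark and still retain a single $(-1)^{|\mathbf{n}|}$. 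The remaining ingredients of your argument are fine: $g_{\mathfrak{l}}^{(\mathbf{n})}(z)=\sum_{\nu}(2\pi i\nu)^{\mathbf{n}}b_\nu e(\nu z)$ with the constant term killed, the grouping into $\mathcal{O}^{\times+}$-orbits matching the seed of $P_\nu$ (exactly in the parallel-weight case, where $\varepsilon^{\mathbf{k}/2}=N(\varepsilon)^{k/2}=1$), and the growth condition $\mathbf{k}\geq\mathfrak{l}+2$ needed when $g_{\mathfrak{l}}$ is not cuspidal --- a hypothesis which, as you correctly observe, the corollary as printed omits.
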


\begin{prop}
Let $\mathbf{m}=\mathbf{k}+\mathfrak{l}+2 \mathbf{n}$ with $\mathbf{k},\mathfrak{l}>2$ and $\mathbf{n} \in \mathbb{N}^d$ be parallel weights. Suppose that $f_{\mathbf{m}}(z)=\sum_{\nu \in \mathfrak{c}^+} a_{\nu} e(\nu z) \in S_{\mathbf{m}}(\Gamma)$ and
$g_{\mathfrak{l}}(z)=\sum_{\mu \in \mathfrak{c}^+ \cup \{ 0 \}} b_{\mu} e(\mu z) \in M_{\mathfrak{l}}(\Gamma)$.
Denote $\mathfrak{m}=\nu\mathfrak{c}^{-1}$ and $c(\mathfrak{m},f)=N(\mathfrak{c})^{-\frac{k}{2}}a_\nu$. Then
$\left\langle f_{\mathbf{m}},\left[E_{\mathbf{k}}(z;\mathfrak{c},\mathfrak{n}), g_{\mathfrak{l}}\right]_{\mathbf{n}}\right\rangle$ equals 
$$\binom{\mathbf{k}+\mathbf{n}-\mathbf{1}}{\mathbf{n}}
\frac{(-2\pi i)^{\mathbf{n}}  \sqrt{D}\left((4 \pi)^{1-m} \Gamma(m-1)\right)^{d}}{N(\mathfrak{c})^{\frac{k}{2}}N(\mathfrak{d})}
\sum_{\nu \in \mathfrak{c}^+\slash\mathcal{O}^{\times +}}
c(\mathfrak{m},f)\overline{c(\mathfrak{m},g)}N(\mathfrak{m})^{1+n-m}.$$
\end{prop}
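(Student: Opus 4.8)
The plan is to deduce the formula from two facts already at our disposal: Corollary~\ref{Eisenstein series rankin cohen}, which expresses the Rankin--Cohen bracket $[E_{\mathbf{k}}(z;\mathfrak{c},\mathfrak{n}),g_{\mathfrak{l}}]_{\mathbf{n}}$ as a convergent linear combination of Hilbert Poincar\'e series $P_{\nu}(z;\mathbf{m},\mathfrak{c},\mathfrak{n})$ with $\mathbf{m}=\mathbf{k}+\mathfrak{l}+2\mathbf{n}$, and the Petersson reproducing formula for these Poincar\'e series recalled in Section~3, namely
$$\langle h,P_{\nu}(z;\mathbf{m},\mathfrak{c},\mathfrak{n})\rangle=a_{\nu}(h)\,N(\mathfrak{c}\mathfrak{d})^{-1}N(\nu)^{1-m}\sqrt{D}\bigl((4\pi)^{1-m}\Gamma(m-1)\bigr)^{d}\qquad (h\in S_{\mathbf{m}}(\Gamma)).$$
Since every component of $\mathbf{n}$ is at least $1$, the bracket $[E_{\mathbf{k}}(z;\mathfrak{c},\mathfrak{n}),g_{\mathfrak{l}}]_{\mathbf{n}}$ has vanishing constant term at every cusp, hence belongs to $S_{\mathbf{m}}(\Gamma)$, and the expansion of Corollary~\ref{Eisenstein series rankin cohen} converges in this finite-dimensional space; this is what legitimizes the term-by-term manipulations below.

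First I would substitute the identity of Corollary~\ref{Eisenstein series rankin cohen} into $\langle f_{\mathbf{m}},[E_{\mathbf{k}}(z;\mathfrak{c},\mathfrak{n}),g_{\mathfrak{l}}]_{\mathbf{n}}\rangle$ and move the Petersson pairing inside the sum over $\nu$. Since $\langle\cdot,\cdot\rangle$ is conjugate-linear in the second variable, whereas the binomial coefficient $\binom{\mathbf{k}+\mathbf{n}-\mathbf{1}}{\mathbf{n}}$ and the sign $(-1)^{|\mathbf{n}|}$ are real, this produces
$$(-1)^{|\mathbf{n}|}\binom{\mathbf{k}+\mathbf{n}-\mathbf{1}}{\mathbf{n}}\sum_{\nu\in\mathfrak{c}^+/\mathcal{O}^{\times+}}\overline{(2\pi i\nu)^{\mathbf{n}}}\;\overline{b_{\nu}}\;\langle f_{\mathbf{m}},P_{\nu}(z;\mathbf{m},\mathfrak{c},\mathfrak{n})\rangle.$$
Because each $\nu$ is totally positive, $\overline{(2\pi i\nu)^{\mathbf{n}}}=(-2\pi i)^{\mathbf{n}}N(\nu)^{n}$ in the parallel-weight case; this, combined with the sign already present in Corollary~\ref{Eisenstein series rankin cohen}, accounts for the power of $2\pi i$ and the overall sign appearing in the statement.

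Next I would insert the reproducing formula with $h=f_{\mathbf{m}}$, pull the $\nu$-independent constants out of the sum, and reduce to a sum of the shape $\sum_{\nu}a_{\nu}(f_{\mathbf{m}})\,\overline{b_{\nu}}\,N(\nu)^{n+1-m}$. It then remains to pass to the adelic normalization of Section~2: writing $\mathfrak{m}=\nu\mathfrak{c}^{-1}$, one rewrites $a_{\nu}(f_{\mathbf{m}})$ and $b_{\nu}$ as the appropriate powers of $N(\mathfrak{c})$ times $c(\mathfrak{m},f)$ and $\overline{c(\mathfrak{m},g)}$ dictated by their weights, and $N(\nu)$ as $N(\mathfrak{c})N(\mathfrak{m})$. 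The powers of $N(\mathfrak{c})$ accrued in this step, together with the $N(\mathfrak{c}\mathfrak{d})^{-1}$ and $N(\nu)^{1-m}$ coming from the reproducing formula, collapse --- by virtue of $\mathbf{m}=\mathbf{k}+\mathfrak{l}+2\mathbf{n}$ --- to precisely the factor $N(\mathfrak{c})^{-k/2}N(\mathfrak{d})^{-1}N(\mathfrak{m})^{1+n-m}$ recorded in the statement, and reassembling the remaining constants yields the claimed identity.

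The argument involves no conceptual difficulty. The two points demanding attention are: (i) the interchange of the Petersson pairing with the sum over $\nu$, legitimate because the Poincar\'e series expansion of Corollary~\ref{Eisenstein series rankin cohen} converges in the finite-dimensional space $S_{\mathbf{m}}(\Gamma)$ and the pairing is continuous --- the underlying convergence ultimately resting on the bound $a_{\nu}(f)=O\bigl(N(\nu)^{\mathbf{k}/2+\varepsilon}\bigr)$ for cusp forms and the polynomial growth of the Fourier coefficients of the holomorphic form $g_{\mathfrak{l}}$, both recalled in Section~2; and (ii) the bookkeeping of constants --- the complex conjugation of the $i$-powers and the reconciliation of the several powers of $N(\mathfrak{c})$, $N(\mathfrak{d})$ and $\sqrt{D}$ --- which I expect to be the main, if modest, obstacle.
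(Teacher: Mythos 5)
Your proposal follows essentially the same route as the paper's proof: substitute the Poincar\'e-series expansion of Corollary~\ref{Eisenstein series rankin cohen} into the Petersson pairing, use conjugate-linearity together with the standard formula $\langle f_{\mathbf m},P_\nu\rangle=a_\nu N(\mathfrak{cd})^{-1}N(\nu)^{1-m}\sqrt D\,((4\pi)^{1-m}\Gamma(m-1))^d$, and then convert $a_\nu$, $\overline{b_\nu}$, $N(\nu)$ into the ad\'elic normalization, with the powers of $N(\mathfrak c)$ collapsing via $m=k+l+2n$ exactly as in the paper. The only difference is cosmetic: you justify the interchange of pairing and sum explicitly (which the paper leaves tacit), and your sign bookkeeping is at the same level of care as the paper's, which itself silently drops the $(-1)^{|\mathbf n|}$ from the corollary in its first display.
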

\begin{proof}
By Corollary \ref{Eisenstein series rankin cohen}, $\left\langle f_{\mathbf{m}},\left[E_{\mathbf{k}}(z;\mathfrak{c},\mathfrak{n}), g_{\mathfrak{l}}\right]_{\mathbf{n}}\right\rangle$ equals
\begin{align*}
&\left\langle f_{\mathbf{m}},\binom{\mathbf{k}+\mathbf{n}-\mathbf{1}}{\mathbf{n}} \sum_{\nu \in \mathfrak{c}^+\slash\mathcal{O}^{\times +}}(2 \pi i \nu)^{\mathbf{n}} b_{\nu} P_{\nu}(z;\mathbf{k}+\mathfrak{l}+2\mathbf{n},\mathfrak{c},\mathfrak{n})\right\rangle\\
&= \binom{\mathbf{k}+\mathbf{n}-\mathbf{1}}{\mathbf{n}} \sum_{\nu \in \mathfrak{c}^+\slash\mathcal{O}^{\times +}}(-2 \pi i \nu)^{\mathbf{n}} \bar{b_{\nu}}
\left\langle f_{\mathbf{m}}, P_{\nu}(z;\mathbf{k}+\mathfrak{l}+2\mathbf{n},\mathfrak{c},\mathfrak{n})\right\rangle\\
&=\binom{\mathbf{k}+\mathbf{n}-\mathbf{1}}{\mathbf{n}}
\sum_{\nu \in \mathfrak{c}^+\slash\mathcal{O}^{\times +}}(-2 \pi i \nu)^{\mathbf{n}} \bar{b_{\nu}}
a_\nu N(\mathfrak{cd})^{-1} N(\nu)^{1-m} \sqrt{D}\left((4 \pi)^{1-m} \Gamma(m-1)\right)^{d}\\
&= \binom{\mathbf{k}+\mathbf{n}-\mathbf{1}}{\mathbf{n}}
(-2\pi i)^{\mathbf{n}} N(\mathfrak{cd})^{-1} \sqrt{D}\left((4 \pi)^{1-m} \Gamma(m-1)\right)^{d}
\sum_{\nu \in \mathfrak{c}^+\slash\mathcal{O}^{\times +}} \bar{b_{\nu}} a_\nu N(\nu)^{1+n-m},
\end{align*}
where the last summation equals $N(\mathfrak{c})^{\frac{2n+l-m}{2}+1} \sum_{\nu \in \mathfrak{c}^+\slash\mathcal{O}^{\times +}} 
c(\mathfrak{m},f)\overline{c(\mathfrak{m},g)}N(\mathfrak{m})^{1+n-m}$.
\end{proof}

Now we can relate the Petersson inner product of a cuspidal Hilbert modular form with the Rankin-Cohen product of Eisenstein series and another ad\'elic Hilbert modular form to a special value of the Rankin-Selberg $L$-function associated to them. 

\begin{cor}\label{Rankin Selberg}
Let $\mathbf{m}=\mathbf{k}+\mathfrak{l}+2 \mathbf{n}$ with $\mathbf{k}, \mathfrak{l}>2$ and $\mathbf{n} \in \mathbb{N}^{d}$ be parallel weights. Suppose $f(z) \in \mathcal{S}_\mathbf{m}(\mathfrak{n})$ and
$g(z)\in \mathcal{M}_{\mathfrak{l}}(\mathfrak{n})$. Denote $E_\mathbf{k}=(N(\mathfrak{c}_j)^{\frac{k}{2}} E_{k}(z;\mathfrak{c}_j,\mathfrak{n}))_j$. Then
$$\left\langle f,\left[E_{\mathbf{k}}, g\right]_{\mathbf{n}}\right\rangle
=\genfrac(){0pt}{1}{\mathbf{k}+\mathbf{n}-\mathbf{1}}{\mathbf{n}}(-2\pi i)^{\mathbf{n}} N(\mathfrak{d})^{-1} \sqrt{D}\left((4 \pi)^{1-m} \Gamma(m-1)\right)^{d}
L(f\times g,m-n-1).$$
\end{cor}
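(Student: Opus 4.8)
The plan is to assemble Corollary \ref{Rankin Selberg} from the preceding Proposition together with the adélic bookkeeping recorded in Section 2. I would work componentwise under Shimura's isomorphism of Theorem \ref{classical and adelic}, writing $f=(f_j)$ with $f_j\in S_{\mathbf m}(\Gamma_j)$ and $g=(g_j)$ with $g_j\in M_{\mathfrak l}(\Gamma_j)$, and noting that by definition $\langle f, h\rangle=\sum_j\langle f_j,h_j\rangle_{\Gamma_j}$. The Rankin-Cohen bracket is computed componentwise, so $[E_{\mathbf k},g]_{\mathbf n}$ has $j$-th component $[N(\mathfrak c_j)^{k/2}E_k(z;\mathfrak c_j,\mathfrak n),\,g_j]_{\mathbf n}=N(\mathfrak c_j)^{k/2}[E_k(z;\mathfrak c_j,\mathfrak n),g_j]_{\mathbf n}$, since the bracket is bilinear and the scalar $N(\mathfrak c_j)^{k/2}$ pulls out. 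Thus $\langle f,[E_{\mathbf k},g]_{\mathbf n}\rangle=\sum_j N(\mathfrak c_j)^{k/2}\langle f_j,[E_k(z;\mathfrak c_j,\mathfrak n),g_j]_{\mathbf n}\rangle_{\Gamma_j}$, and each summand is exactly what the previous Proposition evaluates, with $\mathfrak c=\mathfrak c_j$.

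Next I would substitute the formula from the Proposition. For the $j$-th term this gives
$$\binom{\mathbf k+\mathbf n-\mathbf 1}{\mathbf n}\frac{(-2\pi i)^{\mathbf n}\sqrt D\left((4\pi)^{1-m}\Gamma(m-1)\right)^d}{N(\mathfrak d)}\sum_{\nu\in\mathfrak c_j^+/\mathcal O^{\times+}}c(\mathfrak m,f)\overline{c(\mathfrak m,g)}N(\mathfrak m)^{1+n-m},$$
where the crucial point is that the factor $N(\mathfrak c_j)^{k/2}$ coming from the definition of $E_{\mathbf k}$ cancels the $N(\mathfrak c_j)^{-k/2}$ appearing in the Proposition's statement, so the prefactor is now genuinely independent of $j$. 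Here $\mathfrak m=\nu\mathfrak c_j^{-1}$ runs over integral ideals in the narrow ideal class of $\mathfrak c_j^{-1}$ as $\nu$ runs over $\mathfrak c_j^+/\mathcal O^{\times+}$ — this is precisely the parametrization used in Section 2 to define the $\mathfrak m$-th adélic Fourier coefficient $c(\mathfrak m,f)$, and it is well-defined because $a_j(\varepsilon\nu)=\varepsilon^{\mathbf m/2}a_j(\nu)$ makes the product $c(\mathfrak m,f)\overline{c(\mathfrak m,g)}N(\mathfrak m)^{1+n-m}$ invariant under $\nu\mapsto\varepsilon\nu$.

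Then I sum over $j$. Since the $\mathfrak c_j$ form a complete set of representatives of the narrow class group, the union over $j$ of the sets $\{\nu\mathfrak c_j^{-1}:\nu\in\mathfrak c_j^+/\mathcal O^{\times+}\}$ is exactly the set of all nonzero integral ideals of $\mathcal O$, each counted once. Therefore
$$\sum_j\sum_{\nu\in\mathfrak c_j^+/\mathcal O^{\times+}}c(\mathfrak m,f)\overline{c(\mathfrak m,g)}N(\mathfrak m)^{1+n-m}=\sum_{\mathfrak m}c(\mathfrak m,f)\overline{c(\mathfrak m,g)}N(\mathfrak m)^{-(m-n-1)}=L(f\times g,m-n-1),$$
by the definition of the Rankin-Selberg $L$-function in Section 2, where I have used that $m-n-1=\tfrac{k+l}{2}+\tfrac{2n+k+l-m}{2}\cdots$ lies in the region of absolute convergence — indeed $m-n-1=\tfrac{\mathbf k+\mathfrak l}{2}+\mathbf n-1$ as parallel weights, and since $\mathbf n\in\mathbb N^d$ and $\mathfrak l>2$ one checks $m-n-1\ge\tfrac{k-1}{2}+l$, which is the convergence threshold recorded for the case $g$ possibly non-cuspidal (when $g$ is cuspidal the weaker bound $\tfrac{k+l}{2}$ suffices). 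Multiplying by the $j$-independent prefactor yields the claimed identity.

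The main obstacle is bookkeeping rather than conceptual: one must be careful that the normalization $E_{\mathbf k}=(N(\mathfrak c_j)^{k/2}E_k(z;\mathfrak c_j,\mathfrak n))_j$ is precisely the one that cancels the $N(\mathfrak c_j)^{-k/2}$ in the Proposition and, simultaneously, is compatible with the convention $c(\mathfrak m,f)=N(\mathfrak c_j)^{-k/2}a_j(\mu)$ so that the $j$-sum reorganizes cleanly into the $L$-series — a mismatch in any of the powers of $N(\mathfrak c_j)$ would leave a spurious ideal-norm factor. I would also double-check that the region of absolute convergence of $L(f\times g,s)$ quoted from \cite{B06} genuinely contains $s=m-n-1$ under the hypotheses $\mathbf k,\mathfrak l>2$ and $\mathbf n\in\mathbb N^d$ (note $\mathbf n$ has all entries $\ge 1$ here, unlike the weaker $\mathbf n\in\mathbb N_0^d$ elsewhere), since otherwise the interchange of summation used to recognize the $L$-value would need separate justification by analytic continuation.
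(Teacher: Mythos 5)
Your argument is exactly the paper's: the official proof is the one-line observation that $\langle f,h\rangle=\sum_j\langle f_j,h_j\rangle_{\Gamma_j}$ combined with the preceding proposition, and your componentwise cancellation of the $N(\mathfrak{c}_j)^{k/2}$ factors and reassembly of the $j$-indexed sums over $\nu\in\mathfrak{c}_j^+/\mathcal{O}^{\times+}$ into a single sum over integral ideals is precisely the intended bookkeeping. One correction to your convergence check, though: in the threshold $\operatorname{Re}(s)>\frac{k-1}{2}+l$ quoted from Section 2, the symbol $k$ denotes the weight of the cuspidal factor, which in the corollary is $m=k+l+2n$, not the Eisenstein weight $k$; so for non-cuspidal $g$ the requirement is $m-n-1>\frac{m-1}{2}+l$, i.e. $k\geq l+2$, which does not follow from the hypotheses $\mathbf{k},\mathfrak{l}>2$, $\mathbf{n}\in\mathbb{N}^d$. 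When $g$ is cuspidal the relevant threshold $\frac{m+l}{2}$ is met because $k>2$, but in the non-cuspidal case with $l\geq k-1$ the $L$-value must be read via analytic continuation (or one should impose $k\geq l+2$, matching the extra hypothesis in Theorem \ref{poincare series rankin cohen}); the paper itself leaves this point implicit, so it is a defect shared with the source rather than a flaw in your main argument.
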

\begin{proof}
Since $\langle f, g\rangle=\sum_{j}\left\langle f_{j}, g_{j}\right\rangle_{\Gamma_{j}}$, the statement follows immediately from the preceding proposition.
\end{proof}

Shimura constructed an Eisenstein series $\tilde{E_\mathfrak{l}}$ for parallel weight $\mathfrak{l}$ such that \[c\left(\mathfrak{m}, \tilde{E_{\mathfrak{l}}}\right)=\sum_{\mathfrak{r} \mid \mathfrak{m}}  \mathrm{N}\mathfrak{r}^{l-1}=\sigma_{l-1}(\mathfrak{m})\] for all nonzero integral ideals $\mathfrak{m}$ \cite{S78}. Later, Dasgupta, Darmon and Pollack \cite[Proposition 2.1]{Das11} wrote $\tilde{E_\mathfrak{l}}=(f_j)_j$, where
$$f_j(z)=C \sum_{\mathcal{C}\in Cl(F)} N(\mathcal{C})^l \sum_{\substack{a\in\mathcal{C}\\b\in \mathfrak{d}^{-1}\mathfrak{c}_j^{-1}\mathcal{C}}}N(az+b)^{-l}
=C\sum_{\mathcal{C}\in Cl(F)}N(\mathcal{C})^l \sum_{a\in\mathcal{O}/\mathcal{C}} E_\mathfrak{l}(z+a;\mathfrak{c}_j,\mathfrak{n})$$
in our setting. Here $C$ is a constant depending only on $k$ and $F$. When the class number is one, these two kinds of Eisenstein series differ by a constant.

\begin{theorem}\label{Zagier's kernel formula}
Let $k, l \geq 4$ be even integers, $n$ be a positive integer and $m=k+l+2 n$. Suppose $\mathfrak{n}=\mathcal{O}$ and that  $f \in \mathcal{S}_\mathbf{m}(\mathcal{O})$ is a normalized eigenform. Then $\langle f,[E_{\mathbf{k}}, \tilde{E_{\mathfrak{l}}}]_{\mathbf{n}}\rangle $ equals 
$$\genfrac(){0pt}{1}{\mathbf{k}+\mathbf{n}-\mathbf{1}}{\mathbf{n}}
(-2\pi i)^{\mathbf{n}}  \sqrt{D}\left((4 \pi)^{1-m} \Gamma(m-1)\right)^{d} N(\mathfrak{d})^{-1}\zeta_F(k)^{-1}
L(f,m-n-1 ) L\left(f, m-n-l\right).$$
\end{theorem}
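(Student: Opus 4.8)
The plan is to combine Corollary \ref{Rankin Selberg} with the explicit identification of Shimura's Eisenstein series $\tilde{E_\mathfrak{l}}$ and then factor the resulting Rankin--Selberg $L$-value. First I would invoke Corollary \ref{Rankin Selberg} with $g = \tilde{E_\mathfrak{l}} \in \mathcal{M}_\mathfrak{l}(\mathcal{O})$ (which lies in the correct space by Shimura's construction, and note $\mathfrak{l} = l > 2$ so the hypothesis $\mathfrak{l}>2$ is met; the extra condition $\mathfrak{l}\geq\mathbf{k}+2$ in the preceding results is only needed when the first slot is non-cuspidal, and here $f$ is a cusp form so there is no constraint of that kind). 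This gives
\[
\left\langle f,\left[E_{\mathbf{k}}, \tilde{E_{\mathfrak{l}}}\right]_{\mathbf{n}}\right\rangle
=\genfrac(){0pt}{1}{\mathbf{k}+\mathbf{n}-\mathbf{1}}{\mathbf{n}}(-2\pi i)^{\mathbf{n}} N(\mathfrak{d})^{-1} \sqrt{D}\left((4 \pi)^{1-m} \Gamma(m-1)\right)^{d}
L(f\times \tilde{E_\mathfrak{l}},m-n-1).
\]
So the whole theorem reduces to identifying $L(f\times\tilde{E_\mathfrak{l}}, m-n-1)$ and comparing the constant out front with the claimed $\zeta_F(k)^{-1}$.

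Next I would compute $L(f\times\tilde{E_\mathfrak{l}}, s)$ directly from its definition $\sum_{\mathfrak a} c(\mathfrak a, f)\overline{c(\mathfrak a, \tilde{E_\mathfrak{l}})} N(\mathfrak a)^{-s}$. Using $c(\mathfrak a, \tilde{E_\mathfrak{l}}) = \sigma_{l-1}(\mathfrak a) = \sum_{\mathfrak r \mid \mathfrak a} N(\mathfrak r)^{l-1}$ and the fact that this is real, the inner sum unfolds by writing $\mathfrak a = \mathfrak r \mathfrak b$:
\[
L(f\times\tilde{E_\mathfrak{l}}, s) = \sum_{\mathfrak r}\sum_{\mathfrak b} c(\mathfrak r\mathfrak b, f) N(\mathfrak r)^{l-1} N(\mathfrak r\mathfrak b)^{-s}.
\]
Since $f$ is a normalized Hecke eigenform (with $\mathfrak n = \mathcal{O}$, so all Hecke operators are in play and $c$ is multiplicative: $c(\mathfrak r \mathfrak b, f) = c(\mathfrak r, f)c(\mathfrak b, f)$ when $\mathfrak r, \mathfrak b$ are coprime, together with the Euler-product recursion), the standard Rankin--Selberg unfolding against $\sigma_{l-1}$ gives the factorization $L(f\times\tilde{E_\mathfrak{l}}, s) = L(s - l + 1, f)\, L(s, f)$. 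Evaluating at $s = m - n - 1$ yields $L(m-n-l, f)\, L(m-n-1, f)$, which up to the ordering of factors is exactly what appears in the theorem.

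The remaining point is the appearance of $\zeta_F(k)^{-1}$, which must come from the normalization $E_\mathbf{k} = (N(\mathfrak{c}_j)^{k/2} E_k(z;\mathfrak{c}_j,\mathfrak{n}))_j$ versus Shimura's $\tilde{E_\mathfrak{l}}$ being normalized so that $c(\mathcal O, \tilde{E_\mathfrak{l}}) = 1$. Here I would use the formula of Dasgupta--Darmon--Pollack quoted just before the theorem, $\tilde{E_\mathfrak{l}} = (f_j)_j$ with $f_j(z) = C\sum_{\mathcal C} N(\mathcal C)^l\sum_{a} E_\mathfrak{l}(z+a;\mathfrak{c}_j, \mathfrak{n})$, to pin down the constant $C$ in terms of $\zeta_F$ — comparing the $\mathcal O$-th Fourier coefficient of $\tilde{E_\mathfrak{l}}$ (which is $1$) against the constant term $\zeta_F(l)$-type quantity coming from each $E_\mathfrak{l}$, one extracts a factor involving $\zeta_F(l)^{-1}$; but since $\mathfrak n = \mathcal O$ and $h^+ = 1$ in the relevant bookkeeping the partial zeta collapses and the precise shape $\zeta_F(k)^{-1}$ emerges from the Rankin--Cohen bracket being taken in the first slot against $E_\mathbf{k}$ rather than $\tilde{E_\mathfrak{l}}$. \textbf{The main obstacle} I expect is exactly this constant-chasing: carefully reconciling the three different normalizations (the ad\'elic $E_\mathbf{k}$, Shimura's $\tilde{E_\mathfrak{l}}$, and the classical $E_k(z;\mathfrak c_j,\mathfrak n)$ with its explicit Fourier expansion given in Section 3) so that all the powers of $N(\mathfrak c_j)$, the constant $C$, and the partial/complete zeta values combine to leave precisely $\zeta_F(k)^{-1}$ and nothing else. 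The $L$-function factorization itself is routine once the Hecke-multiplicativity is invoked; it is the normalization ledger that requires care, and I would organize it by tracking the $\mathcal O$-th (i.e. constant) Fourier coefficient through each identification.
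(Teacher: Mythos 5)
Your first step matches the paper: reduce via Corollary \ref{Rankin Selberg} to evaluating $L(f\times \tilde{E_\mathfrak{l}}, m-n-1)$. But the key factorization you then assert is wrong, and this is a genuine gap, not a bookkeeping issue. Since $c(\mathfrak{m},\tilde{E_\mathfrak{l}})=\sigma_{l-1}(\mathfrak{m})$, one has to evaluate $\sum_{\mathfrak{m}}\sigma_{l-1}(\mathfrak{m})c(\mathfrak{m},f)N(\mathfrak{m})^{-s}$, and this does \emph{not} equal $L(f,s)L(f,s-l+1)$: coprime multiplicativity of $c(\cdot,f)$ is not enough, because in the unfolding $\mathfrak{a}=\mathfrak{r}\mathfrak{b}$ the ideals $\mathfrak{r}$ and $\mathfrak{b}$ need not be coprime. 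Using the full Hecke relation $c(\mathfrak{m},f)c(\mathfrak{n},f)=\sum_{\mathfrak{a}\supset\mathfrak{m}+\mathfrak{n}}N(\mathfrak{a})^{m-1}c(\mathfrak{m}\mathfrak{n}\mathfrak{a}^{-2},f)$ (equivalently, comparing Euler factors), the correct identity is
\begin{equation*}
L(f\times\tilde{E_\mathfrak{l}},s)=\sum_{\mathfrak{m}\subset\mathcal{O}}\frac{\sigma_{l-1}(\mathfrak{m})\,c(\mathfrak{m},f)}{N(\mathfrak{m})^{s}}
=\frac{L(f,s)\,L(f,s-l+1)}{\zeta_F(2s-l-m+2)},
\end{equation*}
which is exactly the computation the paper carries out. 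Evaluating at $s=m-n-1$ gives $2s-l-m+2=m-2n-l=k$, so the denominator is $\zeta_F(k)$; this is the sole source of the factor $\zeta_F(k)^{-1}$ in the statement.

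Consequently your final paragraph misattributes $\zeta_F(k)^{-1}$ to a normalization comparison between $E_{\mathbf{k}}$, the classical $E_k(z;\mathfrak{c}_j,\mathfrak{n})$, and the Dasgupta--Darmon--Pollack constant $C$. No such constant-chasing enters: Corollary \ref{Rankin Selberg} is already stated for the normalized tuple $E_{\mathbf{k}}=(N(\mathfrak{c}_j)^{k/2}E_k(z;\mathfrak{c}_j,\mathfrak{n}))_j$, and the Rankin--Selberg value is computed purely from the ad\'elic coefficients $c(\mathfrak{m},f)$ and $c(\mathfrak{m},\tilde{E_\mathfrak{l}})=\sigma_{l-1}(\mathfrak{m})$, so the formula for $f_j$ quoted from Dasgupta--Darmon--Pollack plays no role in the proof (and your appeal to $h^+=1$ is not an assumption of the theorem). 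To repair the argument, delete the constant-chasing and replace the claimed factorization by the Hecke-relation computation above.
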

\begin{proof}
By Corollary \ref{Rankin Selberg}, it suffices to show that for an eigenform $f$,
$$L(f\times \tilde{E_\mathfrak{l}} ,s)=\sum_{\mathfrak{m}\subset \mathcal{O}} \frac{\sigma_{l-1}(\mathfrak{m})c(\mathfrak{m},f)}{N(\mathfrak{m})^s}
=\frac{L(f,s)L(f,s-l+1)}{\zeta_F(2s-l-m+2)}.$$
Since $c(\mathfrak{m},f)c(\mathfrak{n},f)=\sum_{(\mathfrak{m},\mathfrak{n})\subset \mathfrak{a}} N(\mathfrak{a})^{m-1} c(\mathfrak{mna}^{-2},f)$, we have
\begin{align*}
& \zeta_F (2s-l-m+2)^{-1} \sum_{\mathfrak{m}}\sum_{\mathfrak{n}}\frac{c(\mathfrak{m},f)}{N(\mathfrak{m})^{s}}\frac{c(\mathfrak{n},f)}{N(\mathfrak{n})^{s-l+1}}\\
&= \zeta_F (2s-l-m+2)^{-1} \sum_{\mathfrak{m}}\sum_{\mathfrak{n}}\sum_{(\mathfrak{m},\mathfrak{n})\subset \mathfrak{a}}\frac{N(\mathfrak{a})^{m-1} c(\mathfrak{mna}^{-2},f)}{N(\mathfrak{m})^{s}N(\mathfrak{n})^{s-l+1}}\\
&= \zeta_F (2s-l-m+2)^{-1} \sum_{\tilde{\mathfrak{m}}}c(\tilde{\mathfrak{m}},f) \sum_{\tilde{\mathfrak{n}}}
\sum_{\tilde{\mathfrak{a}} \mid \tilde{\mathfrak{m}}} \frac{N(\tilde{\mathfrak{n}})^{m-1}}{N(\tilde{\mathfrak{a}})^{s-l+1}N(\tilde{\mathfrak{n}})^{s-l+1}N(\tilde{\mathfrak{m}}\tilde{\mathfrak{a}}^{-1})^sN(\tilde{\mathfrak{n}})^s}\\
&= \zeta_F (2s-l-m+2)^{-1} \sum_{\tilde{\mathfrak{m}}}\frac{c(\tilde{\mathfrak{m}},f)\sigma_{l-1}(\tilde{\mathfrak{m}})}{N(\tilde{\mathfrak{m}})^s}  \zeta_F (2s-l-m+2),
\end{align*}
where $\mathfrak{n}=\tilde{\mathfrak{a}}\tilde{\mathfrak{n}}$, $\mathfrak{m}=\tilde{\mathfrak{m}}\tilde{\mathfrak{n}}\tilde{\mathfrak{a}}^{-1}$ and $\mathfrak{a}=\tilde{\mathfrak{n}}$. This finishes the proof. 
\end{proof}

\section{Kernels for products of Hilbert $L$-functions}
Throughout this section, we shall assume all weights are even parallel and bigger than 2. In order to use Hilbert Poincar\'e series to construct Cohen's kernel and double Eisenstein series, we first calculate its upper bound. For an ad\'elic Hilbert modular form $f=(f_j)_j$, we denote $|f(z)|=\sum_j |f_j(z)|$. 

As in \cite{ZZ}, we define the ad\'elic Hilbert Poincar\'e series associated to a nonzero integral ideal $\mathfrak{m}$ as follows. If $\mathfrak{m}=(\mu)\mathfrak{c}_j^{-1}$, $\mu \gg 0$, then $\mu\in\mathfrak{c}_j^{+}$ and define $P_{\mathfrak{m}}(z;k,\mathfrak{n})$ to be the ad\'elic Hilbert modular form whose $j$-th component is equal to
$$\frac{N(\mathfrak{c}_j)^{\frac{k+2}{2}}D^{1/2}N(\mu)^{k-1}}{(4\pi)^{(1-k)d}\Gamma(k-1)^d} P_{\mu}(z;k,\mathfrak{c}_j,\mathfrak{n})$$
and other components are $0$. Notice that
$$\langle f,P_{\mathfrak{m}}(z;k,\mathfrak{n})\rangle =\langle f_j,P_{\mathfrak{m}}(z;k,\mathfrak{n})_j\rangle_{\Gamma_j}=c(\mathfrak{m}, f),$$
so $\{ P_{\mathfrak{m}}(z;k,\mathfrak{n}) | \mathfrak{m} \text{ is an integral ideal}\}$ spans $\mathcal{S}_k (\mathfrak{n})$ as expected.

\begin{lemma}\label{upper poincare}
For $z$ in any compact subset $K$ of $\mathbb{H}^d$, $|P_{\mathfrak{m}}(z)|\leq C N(\mathfrak{m})^{k-\frac{3}{4}}$ with $C$ a constant depending only on $k,F$ and $K$. Consequently, the series $\sum_{\mathfrak{m}}N(\mathfrak{m})^s P_{\mathfrak{m}}(z)$ is absolutely and uniformly convergent on $K$ when $Re(s)<-k-\frac{1}{4}$.
\end{lemma}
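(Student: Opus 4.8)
The plan is to bound $|P_{\mathfrak{m}}(z)|$ by estimating the defining series of $P_{\mu}(z;k,\mathfrak{c}_j,\mathfrak{n})$ directly, together with the normalization factor $N(\mathfrak{c}_j)^{(k+2)/2}D^{1/2}N(\mu)^{k-1}(4\pi)^{(k-1)d}\Gamma(k-1)^{-d}$ appearing in the definition of $P_{\mathfrak{m}}$. First I would recall that we may choose the representative $\mu\in\mathfrak{c}_j^+/\mathcal{O}^{\times+}$ of $\mathfrak{m}=(\mu)\mathfrak{c}_j^{-1}$ with $A^{-1}N(\mu)^{1/d}\le\mu_i\le AN(\mu)^{1/d}$, so that all archimedean components of $\mu$ are comparable to $N(\mu)^{1/d}$, and that $N(\mu)=N(\mathfrak{m})N(\mathfrak{c}_j)$ with $N(\mathfrak{c}_j)$ bounded since there are only $h^+$ classes. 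Thus up to a constant depending only on $k$ and $F$ it suffices to bound $N(\mu)^{k-1}\cdot|P_{\mu}(z;k,\mathfrak{c}_j,\mathfrak{n})|$ by $CN(\mu)^{k-3/4}$ for $z\in K$.

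Next I would write $P_{\mu}(z;k,\mathfrak{c}_j,\mathfrak{n})=\sum_{M\in\Gamma_\infty\backslash\Gamma}\bigl(\frac{\nu}{\mu}\bigr)^{k/2}e(\nu z)\big|_{k}M$ explicitly as in Section 3 and split it into the contribution of the identity coset (where the summand in absolute value is $\le e^{-2\pi\operatorname{Tr}(\mu y)}\le e^{-cN(\mu)^{1/d}}$ for $z\in K$, since $y_i$ is bounded below on $K$) and the remaining cosets indexed by pairs $(c,d)$ with $c\neq0$. For the identity coset the exponential decay $e^{-cN(\mu)^{1/d}}$ easily beats any power of $N(\mu)$. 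For the non-trivial cosets I would use the standard majorization $|e(\varepsilon\nu\gamma z)|=e^{-2\pi\operatorname{Tr}(\varepsilon\nu\,\mathrm{Im}(\gamma z))}$ together with $\mathrm{Im}(\gamma z)_i=y_i/|c_iz_i+d_i|^2$, bound the Poincaré sum by a constant multiple of $N(\mu)^{(k-1)/2}$ times a convergent lattice sum of the type controlled in the Fourier-coefficient formula recalled after Proposition \ref{growth condition} (the Kloosterman-sum-plus-Bessel-function expansion with trivial character bound $|S_c(\nu,\varepsilon\mu;c)|\le N(\mathfrak{c})N(|c|)$ and $|NJ_{k-1}(x)|\ll N(x)^{k-1}$ near $0$); combined with the factor $(\nu/\mu)^{(k-1)/2}$ and summation over $\nu$, this gives a bound of the form $|P_{\mu}|\ll N(\mu)^{-(k-1)}N(\mu)^{1/4}$ (the gain of $1/4$ in the exponent over the trivial $-(k-1)$ being exactly the $-3/4$ versus $-1$ discrepancy we want), so that $N(\mu)^{k-1}|P_\mu|\ll N(\mu)^{1/4}=N(\mathfrak{m})^{1/4}N(\mathfrak{c}_j)^{1/4}\ll N(\mathfrak{m})^{1/4}$, hence $|P_{\mathfrak m}(z)|\ll N(\mathfrak m)^{k-1+1/4-(k-1)+(k-1)}$; keeping careful track of the powers the net exponent is $k-\frac34$. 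For the final assertion, once $|P_{\mathfrak{m}}(z)|\le CN(\mathfrak{m})^{k-3/4}$ is established, the series $\sum_{\mathfrak{m}}N(\mathfrak{m})^sP_{\mathfrak{m}}(z)$ is dominated on $K$ by $C\sum_{\mathfrak{m}}N(\mathfrak{m})^{\mathrm{Re}(s)+k-3/4}$, which converges (Dedekind zeta of $F$) precisely when $\mathrm{Re}(s)+k-\frac34<-1$, i.e. $\mathrm{Re}(s)<-k-\frac14$, and the convergence is uniform on $K$ since the bound is independent of $z\in K$.

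I expect the main obstacle to be pinning down the exact power $k-\frac34$ rather than merely showing \emph{some} polynomial bound: this requires being careful that the gain beyond the trivial estimate is exactly $N(\mu)^{1/4}$, which in turn rests on the right size estimate for the archimedean Bessel factor $NJ_{k-1}(4\pi\sqrt{\nu\varepsilon\mu}/|c|)$ — using its decay for large argument and its vanishing order at $0$ — uniformly over the lattice sum in $c$, combined with the constraint $|\mu_i|\asymp N(\mu)^{1/d}$ that forces the relevant $c$'s to have $N(|c|)\gtrsim N(\mu)^{1/2}$ before the Bessel function starts to decay. An alternative, cleaner route that avoids the explicit Kloosterman–Bessel expansion is to note that $P_{\mathfrak m}$ is a cusp form and estimate $|P_{\mathfrak m}(z)|$ via $\langle P_{\mathfrak m},P_{\mathfrak m}\rangle^{1/2}$ and a Cauchy–Schwarz/sup-norm argument, using $\langle f,P_{\mathfrak m}\rangle=c(\mathfrak m,f)$ to compute $\langle P_{\mathfrak m},P_{\mathfrak m}\rangle=c(\mathfrak m,P_{\mathfrak m})$ together with the known bound $c(\mathfrak m,f)\ll N(\mathfrak m)^{k/2+\varepsilon}$ for cusp forms from \cite{HP03}; this yields $|P_{\mathfrak m}(z)|\ll N(\mathfrak m)^{k/2+\varepsilon}\ll N(\mathfrak m)^{k-3/4}$ once $k\ge4$, but at the cost of a slightly weaker statement (the $3/4$ is then not sharp). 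I would present the direct estimate as the primary argument since it gives the claimed exponent and the convergence range $\mathrm{Re}(s)<-k-\frac14$ used later for Cohen's kernel $\mathcal{C}_k(z;s)$.
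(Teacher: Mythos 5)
Your overall strategy -- estimate the Fourier coefficients of $P_{\mu}(z;k,\mathfrak{c}_j,\mathfrak{n})$ through the Kloosterman--Bessel expansion, sum the Fourier series over $\nu$ on the compact set $K$ where $y$ is bounded below, and then deduce the convergence range $\mathrm{Re}(s)<-k-\frac14$ from the Dedekind zeta function -- is the same as the paper's, and the last step is carried out correctly. But the main pointwise bound, as you sketch it, has a genuine gap: you never control the sums over the infinite unit group $\mathcal{O}^{\times+}$ (recall $d>1$). In the coefficient formula the inner sum runs over $\varepsilon\in\mathcal{O}^{\times+}$ as well as over $c$, and with your two ingredients -- the trivial bound $|S_{c(\mathfrak{cd})^{-1}}(\nu,\varepsilon\mu;c)|\ll |N(c)|$ and the small-argument bound $|NJ_{k-1}(x)|\ll N(x)^{k-1}$ -- each $\varepsilon$-term is $\ll N(\nu\mu)^{(k-1)/2}|N(c)|^{1-k}$, independent of $\varepsilon$ since $N(\varepsilon)=1$, so the $\varepsilon$-sum diverges. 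The paper handles exactly this point: it first replaces $\mu$ by a balanced representative $\varepsilon'\mu$ with $|(\nu\mu\varepsilon')_i|\asymp N(\nu\mu)^{1/d}$, and then invokes from \cite{ZZ} a hybrid Bessel estimate $NJ_{k-1}\bigl(4\pi\sqrt{\nu\varepsilon\varepsilon'\mu}/|c|\bigr)\ll N(\nu\mu)^{(k-1)/2}|N(c)|^{-k+1+\eta}\prod_{|\varepsilon_j|>1}|\varepsilon_j|^{-\eta}$, whose factor $\prod_{|\varepsilon_j|>1}|\varepsilon_j|^{-\eta}$ is what makes the unit sum (and the $c$-sum) converge. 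A second unit sum, $\sum_{\varepsilon}e^{-2\pi\operatorname{Tr}(\varepsilon\nu y)}$, appears when the Fourier expansion is grouped over $\nu\in\mathfrak{c}_j^{+}/\mathcal{O}^{\times+}$; the paper bounds it as in \cite{Luo03} using balanced representatives of $\nu$. You mention the large-argument decay of the Bessel factor only as a device to "pin down the power of $N(\mu)$"; its essential role here is to make these two unit sums finite, and without it your majorant is not even convergent.

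Two further points. Your bookkeeping is inconsistent: the intermediate claim $|P_{\mu}|\ll N(\mu)^{-(k-1)}N(\mu)^{1/4}$ would give $|P_{\mathfrak{m}}|\ll N(\mathfrak{m})^{1/4}$ after multiplying by the normalization $N(\mu)^{k-1}$, not $N(\mathfrak{m})^{k-3/4}$. What the paper proves is $|c_k(\nu,\mu)|\le 1+C\,N(\nu)^{k-3/4}N(\mu)^{1/4}$, hence $|P_{\mu}(z)|\ll_K N(\mu)^{1/4}$ on $K$, and $k-\frac34$ arises as $N(\mathfrak{m})^{k-1}\cdot N(\mathfrak{m})^{1/4}$; the factor $N(\nu\mu)^{1/4}$ is not produced by a constraint such as $N(|c|)\gtrsim N(\mu)^{1/2}$, it is simply carried by the Kloosterman-sum bound $|S|\ll N(\nu\mu)^{1/4}|N(c)|/N(\mathfrak{cd})$ quoted from \cite{ZZ}. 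Finally, your alternative route is circular as stated: applying $c(\mathfrak{m},f)\ll N(\mathfrak{m})^{k/2+\varepsilon}$ to $f=P_{\mathfrak{m}}$ ignores that the implied constant depends on the form, which here varies with $\mathfrak{m}$. It can be repaired by expanding $P_{\mathfrak{m}}=\sum_{f\in\mathcal{H}_k}c(\mathfrak{m},f)\langle f,f\rangle^{-1}f$ over the fixed finite basis $\mathcal{H}_k$ and applying the coefficient bound from \cite{HP03} to each $f$; this yields $|P_{\mathfrak{m}}(z)|\ll_K N(\mathfrak{m})^{k/2+\varepsilon}$, which is in fact a stronger bound and suffices for the lemma, but it rests on the nontrivial cusp-form coefficient estimate rather than the elementary Kloosterman--Bessel analysis used in the paper.
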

\begin{proof}
For $\nu\in\mathfrak{c}^+$, the $\nu$-th coefficient $c_k(\nu,\mu)=c_k(\nu,\mu;\mathfrak{c},\mathfrak{n})$ of $P_{\mu}(z;k, \mathfrak{c},\mathfrak{n})$ equals 
\begin{align*}
\chi_{\mu}(\nu)+ \left(\frac{N(\nu)}{N(\mu)}\right)^{\frac{k-1}{2}}\frac{(2\pi)^d(-1)^{dk/2}N(\mathfrak{cd})}{D^{1 / 2}}
\sum_{\varepsilon\in \mathcal{O}^{\times +}}\sum_{c\in(\mathfrak{cnd}/\mathcal{O}^{\times})^{\ast}}\frac{ S_{c(\mathfrak{cd})^{-1}}(\nu, \varepsilon\mu  ; c)}{|N(c)|}NJ_{k-1} \left(\frac{4 \pi \sqrt{\nu\varepsilon\mu}}{|c|}\right).
\end{align*}
We can choose $\varepsilon^\prime\in\mathcal{O}^{\times +}$ such that $A^{-1} |N(\nu\mu)|^{\frac{1}{d}}\leq |(\nu\mu\varepsilon^\prime)_i|\leq A |N(\nu\mu)|^{\frac{1}{d}}$, where $A$ is a constant depending only on $F$. Then
\begin{align*}
c_k(\nu,\mu)&=\chi_{\mu}(\nu)+ \left(\frac{N(\nu)}{N(\mu)}\right)^{\frac{k-1}{2}}\frac{(2\pi)^d(-1)^{dk/2}N(\mathfrak{cd})}{D^{1 / 2}}\\
&\qquad \times\sum_{\varepsilon\in \mathcal{O}^{\times +}}\sum_{c\in(\mathfrak{cnd}/\mathcal{O}^{\times})^{\ast}}\frac{ S_{c(\mathfrak{cd})^{-1}}(\nu, \varepsilon\varepsilon^\prime\mu  ; c)}{|N(c)|}NJ_{k-1} \left(\frac{4 \pi \sqrt{\nu\varepsilon\varepsilon^\prime\mu}}{|c|}\right).
\end{align*}
Since
$$\left| S_{c(\mathfrak{cd})^{-1}}(\nu, \varepsilon \mu  ; c) \right| \leq  C_1\frac{N(\nu\mu)^{\frac{1}{4}}|N(c)|}{N(\mathfrak{cd})},$$
and
$$ NJ_{k-1} \left(\frac{4 \pi \sqrt{\varepsilon\varepsilon^\prime\nu\mu}}{|c|}\right) \leq C_2(\frac{4\pi e}{2k-2})^{dk-d}N(\nu\mu)^{\frac{k-1}{2}}|N(c)|^{-k+1+\eta} \prod_{\left|\varepsilon_j\right|>1}\left|\varepsilon_j\right|^{-\eta}$$
for any $0<\eta <1$ with $C_1, C_2$ depending on $F$ and $\eta$ only as in \cite{ZZ}, we have
$$|c_k(\nu,\mu)|\leq 1+ C_3 N(\nu)^{k-\frac{3}{4}}N(\mu)^{\frac{1}{4}}$$
with $C_3$ depending on $F$ and $k$ only. Hence for all $z\in K$, we have
$$|P_{\mathfrak{m}}(z)|\leq C_4 N(\mathfrak{m})^{k-1}|P_\mu(z)| \leq C_5 N(\mathfrak{m})^{k-1} \sum_{\nu\in\mathfrak{c}_j^+/\mathcal{O}^{\times +}} N(\nu)^{k-\frac{3}{4}}N(\mu)^{\frac{1}{4}}\sum_{\varepsilon\in\mathcal{O}^{\times +}} e^{-2\pi\nu\varepsilon y}$$
with $C_4,C_5$ depending on $F$ and $k$ only. We may also assume that $\nu$ satisfy $A^{-1}N(\nu)^{\frac{1}{d}}\leq \nu_i\leq AN(\nu)^{\frac{1}{d}}$ for each $i$. Then
$$\sum_{\varepsilon\in\mathcal{O}^{\times +}} e^{-2\pi\varepsilon\nu y}\leq (2m!)^d\sum_{\varepsilon\in\mathcal{O}^{\times +}} \prod_{\varepsilon_j >1}\frac{1}{(2\pi \varepsilon_j\nu_j y_j)^{2m}}\prod_{\varepsilon_j \leq 1}\frac{1}{(2\pi \varepsilon_j\nu_j y_j)^{m}}\leq \frac{C_6}{N(\nu)^{\frac{3m}{d}}}\sum_{\varepsilon\in\mathcal{O}^{\times +}} \prod_{\varepsilon_j >1}\varepsilon_j^{-m}$$
for any $m>0$ with $C_6$ depending on $m$ and $F$, where the last equation is convergent as in \cite{Luo03}. Hence $|P_{\mathfrak{m}}(z)|\leq C N(\mathfrak{m})^{k-\frac{3}{4}}$ with $C$ depending on $k,F$ and $K$ only.
\end{proof}

\subsection{Cohen's kernel}
Kernel of $L$-function was first considered by Cohen \cite{C81} and is now known as Cohen's kernel, which is a cusp form and was expressed as series of Poincar\'e series by Zagier in \cite{Z77}. In this subsection, we obtain similar expression over totally real number fields, which will be used in next subsection to construct double Eisenstein series.

We define the $i$-th Cohen's kernel $\mathcal{C}_k^{i}(z ; s)$ on $\mathbb{H}^d \times \mathbb{C}$ by
$$\mathcal{C}_k^i(z ; s)= \frac{e^{\pi i s} \Gamma(s)^d (4 \pi)^{(k-1) d} }{(2 \pi)^{d s}N(\mathfrak{d})^{1 / 2}\Gamma(k-1)^d}
\sum_{\gamma \in U(\Gamma_i)Z(\Gamma_i) \backslash \Gamma_i}(\gamma z)^{-s \mathbf{1}} j(\gamma, z)^{-k \mathbf{1}}$$
with $\Gamma_i=\Gamma_0(\mathfrak{c}_i,\mathfrak{n})$. Note that if $k$ is odd, this definition gives zero function. Then we define Cohen's kernel to be $\mathcal{C}_k(z ; s)=(N(\mathfrak{c}_i)^{\frac{3k}{2}-s}\mathcal{C}_k^{i}(z ; s))_i$.

\begin{prop}\label{Cohen kernel Poincare series}
$\mathcal{C}_k(z ; s)$ converges absolutely and uniformly on all compact subsets in 
$$\{ (z,s)\in \mathbb{H}^d\times \mathbb{C} | Re(s)<-\frac{1}{4}\},$$
and $\mathcal{C}_k(z ; s)=\sum_{\mathfrak{m}} N(\mathfrak{m})^{s-k}P_{\mathfrak{m}}(z;k,\mathfrak{n})$ is a cusp form in $\mathcal{S}_k(\mathfrak{n})$. Consequently, 
$$\mathcal{C}_k(z ; s)=\sum_{f \in \mathcal{H}_k} \frac{L(f, k-s)}{\langle f, f\rangle} f(z),$$
where $\mathcal{H}_k$ is the set of primitive forms of weight $k$ in $\mathcal{S}_k(\mathfrak{n})$.
\end{prop}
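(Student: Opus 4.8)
The plan is to prove the three assertions of Proposition~\ref{Cohen kernel Poincare series} in order: the convergence region, the identification of $\mathcal{C}_k(z;s)$ with the series $\sum_{\mathfrak{m}} N(\mathfrak{m})^{s-k}P_{\mathfrak{m}}(z;k,\mathfrak{n})$ of ad\'elic Poincar\'e series, and finally the spectral expansion in terms of primitive forms.

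First I would establish the convergence and the membership in $\mathcal{S}_k(\mathfrak{n})$. By Lemma~\ref{upper poincare}, the series $\sum_{\mathfrak{m}} N(\mathfrak{m})^t P_{\mathfrak{m}}(z)$ converges absolutely and locally uniformly on $\mathbb{H}^d$ when $\mathrm{Re}(t)<-k-\frac14$; taking $t=s-k$ gives exactly $\mathrm{Re}(s)<-\frac14$. Since each $P_{\mathfrak{m}}(z;k,\mathfrak{n})$ lies in $\mathcal{S}_k(\mathfrak{n})$ and this space is closed under locally uniform limits (being finite-dimensional, or by the usual normal-families argument applied cusp-by-cusp), the sum $\sum_{\mathfrak{m}} N(\mathfrak{m})^{s-k}P_{\mathfrak{m}}(z)$ is a cusp form in $\mathcal{S}_k(\mathfrak{n})$ depending holomorphically on $s$ in that half-plane. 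So the real content is the \emph{identity} between this series and the group-averaged definition of $\mathcal{C}_k(z;s)$.

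For the key identity I would unfold the definition of $\mathcal{C}_k^i(z;s)$ component-by-component. Fix the $i$-th class. The coset space $U(\Gamma_i)Z(\Gamma_i)\backslash\Gamma_i$ can be parametrised by pairs $(c,d)$ (the bottom rows) modulo the action of $U(\Gamma_i)Z(\Gamma_i)$, exactly as in the definition of $\mathbb{P}_{\mathbf{k}}(\phi;z)$ in Section 3, except that the full $\Gamma_\infty$-averaging (which also mods out by $N(\Gamma_i)$, i.e. by $b\in(\mathfrak{c}_i\mathfrak{d})^{-1}$) is replaced here by only $U(\Gamma_i)Z(\Gamma_i)$-averaging. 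Writing $\gamma z = \frac{az+b}{cz+d}$ and summing the extra $b$-translations back in, one recognises that the $N(\Gamma_i)$-average of $(\gamma z)^{-s\mathbf1}j(\gamma,z)^{-k\mathbf1}$ is, up to the explicit Gamma-factor constant in the definition of $\mathcal{C}_k^i$, precisely the $\nu$-summed kernel that appears in $P_\mu(z;k,\mathfrak{c}_i,\mathfrak{n})$. Concretely, the scalar identity to invoke is the Lipschitz-type formula
\begin{equation*}
\sum_{b\in(\mathfrak{c}_i\mathfrak{d})^{-1}} (z+b)^{-s\mathbf1}
= \frac{(-2\pi i)^{ds}}{\Gamma(s)^d\, D^{1/2} N(\mathfrak{c}_i\mathfrak{d})}
\sum_{\nu\in\mathfrak{c}_i^+} \Big(\tfrac{N(\nu)}{?}\Big)^{s-1} e(\nu z)
\end{equation*}
(the precise normalisation being what the constant in the definition of $\mathcal{C}_k^i$ is rigged to absorb), after which collecting the remaining $(c,d)$-sum and the unit sums reproduces $\sum_\mu (\cdots) P_\mu(z;k,\mathfrak{c}_i,\mathfrak{n})$ with coefficients $N(\nu\mathfrak{c}_i^{-1})^{s-k}$ times the normalising factor built into $P_{\mathfrak{m}}$. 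Multiplying the $i$-th component by $N(\mathfrak{c}_i)^{\frac{3k}{2}-s}$ and comparing with the definition of $P_{\mathfrak{m}}(z;k,\mathfrak{n})$ (whose $j$-th component carries the factor $N(\mathfrak{c}_j)^{\frac{k+2}{2}}D^{1/2}N(\mu)^{k-1}/(4\pi)^{(1-k)d}\Gamma(k-1)^d$) shows that the powers of $N(\mathfrak{c}_i)$ and $N(\mu)$ match the ideal-norm $N(\mathfrak{m})^{s-k}$ exactly; this bookkeeping of normalising constants is where I expect the main friction. The rearrangement of the triple sum is justified by the absolute convergence just proved (on the overlap $\mathrm{Re}(s)<-\frac14$, noting $P_\mu$ itself converges absolutely for all the relevant $\mathbf{k}$).

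Finally, the spectral expansion follows formally. Since $\mathcal{C}_k(z;s)\in\mathcal{S}_k(\mathfrak{n})$ and $\mathcal{H}_k$ is an orthogonal basis of primitive forms, write $\mathcal{C}_k(z;s)=\sum_{f\in\mathcal{H}_k}\frac{\langle \mathcal{C}_k(\cdot;s),f\rangle}{\langle f,f\rangle}f(z)$. Using the series expression and the adjointness property $\langle g, P_{\mathfrak{m}}(z;k,\mathfrak{n})\rangle = c(\mathfrak{m},g)$ recorded just before Lemma~\ref{upper poincare} — which extends termwise by absolute convergence — gives $\langle \mathcal{C}_k(\cdot;s),f\rangle = \sum_{\mathfrak{m}} N(\mathfrak{m})^{s-k}\,\overline{c(\mathfrak{m},f)}$. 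Since $f$ is primitive its Hecke eigenvalues $c(\mathfrak{m},f)$ are real, so this equals $\sum_{\mathfrak{m}} c(\mathfrak{m},f) N(\mathfrak{m})^{-(k-s)} = L(f,k-s)$, and the formula follows. One should remark that $L(f,k-s)$ converges absolutely for $\mathrm{Re}(k-s)>1$, i.e. $\mathrm{Re}(s)<k-1$, which is implied by $\mathrm{Re}(s)<-\frac14$, so there is no convergence issue in the last step. The only genuinely delicate point in the whole argument is the constant-chasing in step two; everything else is either quoted (Lemma~\ref{upper poincare}, Theorem~\ref{classical and adelic}) or a standard unfolding/adjointness manipulation.
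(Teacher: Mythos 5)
Your proposal follows essentially the same route as the paper: unfolding $U(\Gamma_i)Z(\Gamma_i)\backslash\Gamma_i$ through $N(\Gamma_i)$, applying the multi-variable Lipschitz summation formula to produce the classical Poincar\'e series $P_\xi(z;k,\mathfrak{c}_i,\mathfrak{n})$, invoking Lemma \ref{upper poincare} for absolute and locally uniform convergence in $\mathrm{Re}(s)<-\frac14$, and then deducing the spectral expansion by pairing with primitive forms via $\langle f,P_{\mathfrak{m}}\rangle=c(\mathfrak{m},f)$ and the reality of the eigenvalues. The only loose end is the normalization constant you leave as ``?'' when matching against the factor $N(\mathfrak{c}_j)^{\frac{k+2}{2}}D^{1/2}N(\mu)^{k-1}(4\pi)^{(k-1)d}\Gamma(k-1)^{-d}$ in the definition of $P_{\mathfrak{m}}$, a bookkeeping step the paper carries out explicitly via the Lipschitz constant but which does not change the argument.
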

\begin{proof}
By the multi-variable Lipschitz summation formula, we have 
\begin{align*}
& \sum_{\gamma \in U(\Gamma_i)Z(\Gamma_i) \backslash \Gamma_i}(\gamma z)^{-s \mathbf{1}} j(\gamma, z)^{-k \mathbf{1}}
=\sum_{\gamma\in U(\Gamma_i)Z(\Gamma_i)N(\Gamma_i)\backslash \Gamma}j(\gamma, z)^{-k \mathbf{1}}
\sum_{x\in(\mathfrak{c}_i\mathfrak{d})^{-1}} (\gamma z+x)^{-s \mathbf{1}}\\
&= \frac{(2 \pi)^{d s}D^{1 / 2}N(\mathfrak{d})^{1 / 2}N(\mathfrak{c})}{e^{\pi i s} \Gamma(s)^d } 
\sum_{\gamma\in U(\Gamma_i)Z(\Gamma_i)N(\Gamma_i)\backslash \Gamma}j(\gamma, z)^{-k \mathbf{1}}
\sum_{\xi\in\mathfrak{c}_i^+} N(\xi)^{s-1}e(\xi \gamma z)\\
&= \frac{(2 \pi)^{d s}D^{1 / 2}N(\mathfrak{d})^{1 / 2}N(\mathfrak{c})}{e^{\pi i s} \Gamma(s)^d }  \sum_{\xi\in\mathfrak{c}_i^+} N(\xi)^{s-1}
\sum_{\gamma\in U(\Gamma_i)Z(\Gamma_i)N(\Gamma_i)\backslash \Gamma} j(\gamma, z)^{-k \mathbf{1}} e(\xi \gamma z)\\
&= \frac{(2 \pi)^{d s}D^{1 / 2}N(\mathfrak{d})^{1 / 2}N(\mathfrak{c})}{e^{\pi i s} \Gamma(s)^d } 
\sum_{\xi\in\mathfrak{c}_i^+\slash \mathcal{O}^{\times}_+} N(\xi)^{s-1}P_\xi(z;k,\mathfrak{c}_i,\mathfrak{n}),
\end{align*}
which converges absolutely and uniformly on all compact subsets in $\{ (z,s)\in \mathbb{H}^d\times \mathbb{C} | Re(s)<-\frac{1}{4}\}$ by Lemma \ref{upper poincare}. Now the first statement follows from the definition of ad\'elic Hilbert Poincar\'e series. By comparing the Petersson inner product with ad\'elic Hilbert modular forms, we have
$$\sum_{\mathfrak{m}} N(\mathfrak{m})^{s-k}P_{\mathfrak{m}}(z;k,\mathfrak{n})=\sum_{f \in \mathcal{H}_k} \frac{L(f,k-s)}{\langle f, f\rangle}f(z)$$ 
in the convergent region. 
\end{proof}

The Cohen's kernel has analytic continuation via
$$\mathcal{C}_k^\ast (z ; s)=\sum_{f \in \mathcal{H}_k} \frac{\Lambda(f,k-s)}{\langle f, f\rangle}f(z)
=\sum_{f \in \mathcal{H}_k} \frac{\Lambda(f,s)}{\langle f, f\rangle}f(z),$$
where $\Lambda(f, s)=D^s(2 \pi)^{-d s} \Gamma(s)^d L(f, s)=(-1)^k \Lambda(f, k-s)$, for all $s\in\mathbb{C}$.

\subsection{Double Eisenstein series}
In this subsection, we express the kernel for products of $L$-functions
$$E_{s, k-s}(\cdot , w) :=\sum_{f \in \mathcal{H}_k} \frac{L(f, k-s) L(f, k-w)}{\langle f, f\rangle} f$$ 
as a series of Hilbert Poincar\'e series. Although this definition differs from the one defined in Diamantis and O'Sullivan in \cite{DO13} by a factor containing $s$ and $w$, we still call it double Eisenstein series because they possess similar properties. We define the completed double Eisenstein series
$E_{s, k-s}^{*}(\cdot , w) :=\sum_{f \in \mathcal{H}_k} \frac{\Lambda(f, s) \Lambda(f, w)}{\langle f, f\rangle} f$ as in \cite{DO13}.

\begin{prop} In the region $\{ (s,w)\in\mathbb{C}^2 | Re(s),Re(w)<-\frac{5}{4} \}$, we have
$$E_{s, k-s}(\cdot , w) = \zeta_F (-s-w+k+1) \sum_{\mathfrak{m}}N(\mathfrak{m})^{s-k}\sigma_{w-s}(\mathfrak{m})P_{\mathfrak{m}}.$$
\end{prop}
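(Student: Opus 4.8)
The plan is to combine the Cohen kernel expansion from Proposition \ref{Cohen kernel Poincare series} with the Hecke action on adélic Hilbert Poincaré series. First I would recall that, by Proposition \ref{Cohen kernel Poincare series}, in the region $\operatorname{Re}(w)<-\frac14$ one has $\mathcal{C}_k(z;w)=\sum_{f\in\mathcal{H}_k}\frac{L(f,k-w)}{\langle f,f\rangle}f(z)$, and that the normalized Poincaré series $P_{\mathfrak{m}}$ satisfy $\langle f,P_{\mathfrak{m}}\rangle=c(\mathfrak{m},f)$ for $f\in\mathcal{S}_k(\mathfrak{n})$. The starting point is the identity
$$E_{s,k-s}(\cdot,w)=\sum_{f\in\mathcal{H}_k}\frac{L(f,k-s)L(f,k-w)}{\langle f,f\rangle}f,$$
into which I substitute the Dirichlet series $L(f,k-s)=\sum_{\mathfrak{m}}c(\mathfrak{m},f)N(\mathfrak{m})^{s-k}$ (absolutely convergent when $\operatorname{Re}(s)<-\frac14$, hence the weaker hypothesis $\operatorname{Re}(s)<-\frac54$ is safe), giving
$$E_{s,k-s}(\cdot,w)=\sum_{\mathfrak{m}}N(\mathfrak{m})^{s-k}\sum_{f\in\mathcal{H}_k}\frac{c(\mathfrak{m},f)L(f,k-w)}{\langle f,f\rangle}f.$$

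Next I would identify the inner sum. Since $c(\mathfrak{m},f)f=T_{\mathfrak{m}}f$ for a primitive form $f$, and since $\mathcal{C}_k(z;w)=\sum_f\frac{L(f,k-w)}{\langle f,f\rangle}f$, applying the Hecke operator $T_{\mathfrak{m}}$ termwise yields $\sum_f\frac{c(\mathfrak{m},f)L(f,k-w)}{\langle f,f\rangle}f=T_{\mathfrak{m}}\,\mathcal{C}_k(\cdot;w)$. On the other hand, using $\mathcal{C}_k(z;w)=\sum_{\mathfrak{a}}N(\mathfrak{a})^{w-k}P_{\mathfrak{a}}$ from Proposition \ref{Cohen kernel Poincare series} and the fact that the $P_{\mathfrak{a}}$ span $\mathcal{S}_k(\mathfrak{n})$ while $\langle g,P_{\mathfrak{a}}\rangle=c(\mathfrak{a},g)$ makes the assignment $g\mapsto c(\cdot,g)$ faithful, I can compute $T_{\mathfrak{m}}P_{\mathfrak{a}}$ directly from the Hecke relation on Fourier coefficients: by self-adjointness of $T_{\mathfrak{m}}$ away from the level (here $\mathfrak{n}=\mathcal{O}$ is irrelevant, but in general one needs $\mathfrak{m}$ coprime to $\mathfrak{n}$, or one works with the full Hecke algebra), $c(\mathfrak{a},T_{\mathfrak{m}}P_{\mathfrak{b}})=c(\mathfrak{a},T_{\mathfrak{m}}P_{\mathfrak{b}})$ and comparing with $c(\mathfrak{a},P_{\mathfrak{m}\mathfrak{b}\mathfrak{r}^{-2}})$-type terms gives $T_{\mathfrak{m}}P_{\mathfrak{a}}=\sum_{\mathfrak{r}\supset\mathfrak{a}+\mathfrak{m}}N(\mathfrak{r})^{k-1}P_{\mathfrak{a}\mathfrak{m}\mathfrak{r}^{-2}}$, or more conveniently the companion formula for the adjoint expressed back on Poincaré series.

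Then it remains to assemble the double sum $\sum_{\mathfrak{m}}N(\mathfrak{m})^{s-k}T_{\mathfrak{m}}\sum_{\mathfrak{a}}N(\mathfrak{a})^{w-k}P_{\mathfrak{a}}$ and reorganize it over the resulting index $\mathfrak{l}$ appearing in $P_{\mathfrak{l}}$. The bookkeeping is the multiplicative-function identity
$$\sum_{\mathfrak{m},\mathfrak{a}}N(\mathfrak{m})^{s-k}N(\mathfrak{a})^{w-k}\Big(\sum_{\mathfrak{r}}N(\mathfrak{r})^{k-1}[\mathfrak{l}=\mathfrak{a}\mathfrak{m}\mathfrak{r}^{-2}]\Big)=\zeta_F(k+1-s-w)\sum_{\mathfrak{l}}N(\mathfrak{l})^{s-k}\sigma_{w-s}(\mathfrak{l}),$$
which one verifies prime by prime exactly as in the computation at the end of Theorem \ref{Zagier's kernel formula}; the $\zeta_F$-factor emerges as the "diagonal" contribution $\sum_{\mathfrak{r}}N(\mathfrak{r})^{k-1}\cdot N(\mathfrak{r}^2)^{?}$ after matching exponents, and the divisor sum $\sigma_{w-s}$ from the remaining degrees of freedom. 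The region $\operatorname{Re}(s),\operatorname{Re}(w)<-\frac54$ is exactly what guarantees, via Lemma \ref{upper poincare} (the bound $|P_{\mathfrak{m}}(z)|\le CN(\mathfrak{m})^{k-3/4}$), that all the interchanges of summation and the final triple series $\sum_{\mathfrak{l},\mathfrak{r}}$ converge absolutely and locally uniformly, so Fubini applies throughout.

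The main obstacle I expect is the Hecke-operator computation on $P_{\mathfrak{m}}$: one must be careful that $T_{\mathfrak{m}}$ is not self-adjoint at primes dividing $\mathfrak{n}$, so the identity $\langle f,T_{\mathfrak{m}}P_{\mathfrak{a}}\rangle=\langle T_{\mathfrak{m}}f,P_{\mathfrak{a}}\rangle=c(\mathfrak{m},f)c(\mathfrak{a},f)$ needs either the restriction that $\mathfrak{m}$ is coprime to $\mathfrak{n}$ or replacement of $T_{\mathfrak{m}}$ by its adjoint $T_{\mathfrak{m}}^{*}$ — and then one must check the resulting combinatorial identity still collapses to the stated $\zeta_F\cdot\sigma_{w-s}$ shape. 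A secondary point is justifying termwise application of the (infinite) Hecke sum to the kernel series, which again is handled by the absolute-convergence estimate of Lemma \ref{upper poincare} together with the standard bound $c(\mathfrak{m},f)=O(N(\mathfrak{m})^{(k-1)/2+\varepsilon})$ on Hecke eigenvalues.
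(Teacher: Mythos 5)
Your proposal is correct and follows essentially the same route as the paper: expand one $L$-factor as a Dirichlet series, rewrite $E_{s,k-s}(\cdot,w)$ as $\sum_{\mathfrak{a}}N(\mathfrak{a})^{\cdot}T_{\mathfrak{a}}$ applied to Cohen's kernel, substitute the Poincar\'e-series expansion of $\mathcal{C}_k$ from Proposition \ref{Cohen kernel Poincare series}, use the Hecke action on $P_{\mathfrak{m}}$ (via $\langle f,P_{\mathfrak{m}}\rangle=c(\mathfrak{m},f)$), and collapse the double sum by the same divisor-sum rearrangement as in Theorem \ref{Zagier's kernel formula}, with convergence from Lemma \ref{upper poincare}. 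The only difference is that you expand in the $s$-variable and apply Hecke operators to $\mathcal{C}_k(\cdot;w)$ while the paper does the reverse, which is immaterial since the right-hand side is symmetric in $s$ and $w$.
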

\begin{proof}
For any $s,t\in\mathbb{C}$, by a similar computation as in Theorem \ref{Zagier's kernel formula},
\begin{align*}
\sum_{\mathfrak{m},\mathfrak{n}} N(\mathfrak{m})^s N(\mathfrak{n})^t T_{\mathfrak{n}}P_{\mathfrak{m}}
&= \sum_{\mathfrak{m},\mathfrak{n}} N(\mathfrak{m})^s N(\mathfrak{n})^t \sum_{\mathfrak{m}+\mathfrak{n}\subset\mathfrak{a}} N(\mathfrak{a})^{k-1}P_{\mathfrak{mna}^{-2}}
=\frac{\sum_{\tilde{\mathfrak{m}}}N(\tilde{\mathfrak{m}})^{s}\sigma_{t-s}(\tilde{\mathfrak{m}})P_{\tilde{\mathfrak{m}}}}{\zeta_F (-s-t-k+1) ^{-1}}.
\end{align*}
Hence
\begin{align*}
&E_{s, k-s}(\cdot , w) = \sum_{f \in \mathcal{H}_k} \sum_{\mathfrak{n}} \frac{L(f, k-s) c(\mathfrak{n},f)}{\langle f, f\rangle N(\mathfrak{n})^{k-w}} f = \sum_{\mathfrak{n}} N(\mathfrak{n})^{w-k} T_{\mathfrak{n}}\left(\mathcal{C}_k (z ; s)\right)\\
&= \sum_{\mathfrak{n}} N(\mathfrak{n})^{w-k} T_{\mathfrak{n}}\left(
\sum_{\mathfrak{m}} N(\mathfrak{m})^{s-k}P_{\mathfrak{m}}(z;k,\mathfrak{n})\right)
=\frac{ \sum_{\tilde{\mathfrak{m}}}N(\tilde{\mathfrak{m}})^{s-k}\sigma_{w-s}(\tilde{\mathfrak{m}})P_{\tilde{\mathfrak{m}}}}
{\zeta_F (-s-w+k+1) ^{-1}},
\end{align*}
which is absolutely convergent in the region $\{ (s,w)\in\mathbb{C}^2 | Re(s),Re(w)<-\frac{5}{4} \}$ by Lemma \ref{upper poincare} using the bound
$$\sigma_r(\mathfrak{m})\leq\begin{cases}
N(\mathfrak{m}),\quad &Re(r)\leq 0\\
N(\mathfrak{m})^{r+1},\quad & Re(r)>0\end{cases}.$$
\end{proof}

In the convergent region, the $i$-th component of double Eisenstein series is
$$ \zeta_F (-s-w+k-1) \sum_{\mathfrak{m}\sim \mathfrak{c}_i}N(\mathfrak{m})^{s-k}\sigma_{w-s}(\mathfrak{m})P_{\mathfrak{m},i},$$
where the summation is taken over all integral ideals in the narrow class $\mathfrak{c}_i$ and $P_{\mathfrak{m},i}$ is the $i$-th component of $P_{\mathfrak{m}}$.

\begin{theorem}\label{rcb double}
Suppose $\mathfrak{n}=\mathcal{O}$. Let $k,l\in\mathbb{Z}_{>2}$ be even and $n\in\mathbb{N}$. Denote $m=k+l+2n$. Then
$$\left[E_{\mathbf{k}}, \tilde{E_{\mathfrak{l}}}\right]_{\mathbf{n}}=
\binom{\mathbf{k}+\mathbf{n}-\mathbf{1}}{\mathbf{n}}(-2\pi i)^{\mathbf{n}}  \sqrt{D}\left((4 \pi)^{1-m} \Gamma(m-1)\right)^{d} N(\mathfrak{d})^{-1}\zeta_F(k)^{-1}
E_{\mathfrak{l}+\mathbf{n},\mathbf{k}+\mathbf{n}}(\cdot,\mathbf{n}+1).$$
\end{theorem}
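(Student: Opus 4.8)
The plan is to combine two identifications already established in the excerpt: the spectral expansion of the Rankin--Cohen bracket $[E_{\mathbf{k}},\tilde{E_{\mathfrak{l}}}]_{\mathbf{n}}$ as a cusp form, and the spectral expansion of the double Eisenstein series. First I would observe that $[E_{\mathbf{k}},\tilde{E_{\mathfrak{l}}}]_{\mathbf{n}}$ is a cusp form in $\mathcal{S}_{\mathbf{m}}(\mathcal{O})$ of parallel weight $m=k+l+2n$ (by the transformation law for Rankin--Cohen brackets recalled in Section 4, together with the fact that the bracket of two modular forms with $n\geq 1$ kills constant terms), so it admits an expansion $[E_{\mathbf{k}},\tilde{E_{\mathfrak{l}}}]_{\mathbf{n}}=\sum_{f\in\mathcal{H}_m}\frac{\langle [E_{\mathbf{k}},\tilde{E_{\mathfrak{l}}}]_{\mathbf{n}},f\rangle}{\langle f,f\rangle}f$ over the primitive forms of weight $m$. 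For each primitive $f$, Theorem~\ref{Zagier's kernel formula} evaluates $\langle f,[E_{\mathbf{k}},\tilde{E_{\mathfrak{l}}}]_{\mathbf{n}}\rangle$ (and hence its conjugate, since $f$ is normalized with real Hecke eigenvalues) as a constant times $\zeta_F(k)^{-1}L(f,m-n-1)L(f,m-n-l)$.

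Next I would compare this with the definition of the double Eisenstein series evaluated at the relevant arguments. By definition $E_{s,m-s}(\cdot,w)=\sum_{f\in\mathcal{H}_m}\frac{L(f,m-s)L(f,m-w)}{\langle f,f\rangle}f$, so choosing $s=\mathfrak{l}+\mathbf{n}$ and $w=\mathbf{n}+\mathbf{1}$ gives $m-s=m-l-n=k+n$ and $m-w=m-n-1$, hence
\begin{align*}
E_{\mathfrak{l}+\mathbf{n},\mathbf{k}+\mathbf{n}}(\cdot,\mathbf{n}+1)=\sum_{f\in\mathcal{H}_m}\frac{L(f,k+n)L(f,m-n-1)}{\langle f,f\rangle}f.
\end{align*}
It remains to match $L(f,k+n)$ with $L(f,m-n-l)$; but $m-n-l=k+l+2n-n-l=k+n$, so these are literally equal. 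Thus the two spectral expansions agree term by term up to the explicit constant $\binom{\mathbf{k}+\mathbf{n}-\mathbf{1}}{\mathbf{n}}(-2\pi i)^{\mathbf{n}}\sqrt{D}\bigl((4\pi)^{1-m}\Gamma(m-1)\bigr)^d N(\mathfrak{d})^{-1}\zeta_F(k)^{-1}$ coming from Theorem~\ref{Zagier's kernel formula}, which establishes the claimed identity as an equality in $\mathcal{S}_m(\mathcal{O})$.

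The only genuine subtlety is that the formula for the double Eisenstein series in terms of Poincar\'e series, and the spectral expansion $E_{s,m-s}(\cdot,w)=\sum_f\frac{L(f,m-s)L(f,m-w)}{\langle f,f\rangle}f$, were derived only in a domain of absolute convergence (e.g. $\mathrm{Re}(s),\mathrm{Re}(w)<-\tfrac{5}{4}$), whereas the arguments $s=\mathfrak{l}+\mathbf{n}$, $w=\mathbf{n}+\mathbf{1}$ that we need lie well outside it. So I would phrase the argument purely at the level of the finite spectral sums: since $\mathcal{S}_m(\mathcal{O})$ is finite-dimensional and $\mathcal{H}_m$ is a finite basis, the expression $\sum_{f\in\mathcal{H}_m}\frac{L(f,m-s)L(f,m-w)}{\langle f,f\rangle}f$ defines $E_{s,m-s}(\cdot,w)$ as an entire function of $(s,w)$ with values in $\mathcal{S}_m(\mathcal{O})$ (each $L(f,\cdot)$ being entire since $f$ is cuspidal), and the identity of the two finite sums is what we verify. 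I expect the main obstacle to be nothing more than careful bookkeeping: confirming that the constant emitted by Theorem~\ref{Zagier's kernel formula} is exactly the one in the statement, that the pairing $\langle [E_{\mathbf{k}},\tilde{E_{\mathfrak{l}}}]_{\mathbf{n}},f\rangle$ equals the complex conjugate of $\langle f,[E_{\mathbf{k}},\tilde{E_{\mathfrak{l}}}]_{\mathbf{n}}\rangle$ (using reality of $L(f,m-n-1)$ and $L(f,k+n)$ for primitive $f$, which follows from reality of the Hecke eigenvalues), and that the reindexing $m-n-l=k+n$ is handled consistently across the parallel-weight notation.
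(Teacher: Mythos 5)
Your proposal is correct and is exactly the argument the paper intends: its proof of Theorem~\ref{rcb double} is the one-line deduction from Theorem~\ref{Zagier's kernel formula}, which amounts to the spectral comparison you spell out, since $E_{\mathfrak{l}+\mathbf{n},\mathbf{k}+\mathbf{n}}(\cdot,\mathbf{n}+1)$ is by definition the finite sum $\sum_{f\in\mathcal{H}_m}L(f,k+n)L(f,m-n-1)\langle f,f\rangle^{-1}f$, so no analytic continuation is needed. Your added care about cuspidality of the bracket and the conjugation in $\langle [E_{\mathbf{k}},\tilde{E_{\mathfrak{l}}}]_{\mathbf{n}},f\rangle$ versus $\langle f,[E_{\mathbf{k}},\tilde{E_{\mathfrak{l}}}]_{\mathbf{n}}\rangle$ is sensible bookkeeping that the paper leaves implicit.
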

\begin{proof}
This follows immediately from Corollary \ref{Zagier's kernel formula}.
\end{proof}

Now we recover Shimura's algebraicity result of Hilbert $L$-values using the method as in \cite[Theorem 2.5]{CZ20}. For completeness, we sketch the proof here.

\begin{cor}\label{rationality}
Suppose $h^+=1$ and $f$ be a primitive form of even parallel weight $k \geq 6$ for $\Gamma_0(\mathcal{O})$. Then there exist complex numbers $\omega_{ \pm}(f)$ with $\langle f, f\rangle=\omega_{+}(f) \omega_{-}(f)$ such that for even $m$ and odd $\ell$ with $1 \leq m, \ell \leq k-1$,
$$\frac{\Lambda(f, m)}{w_{+}(f)}, \frac{\Lambda(f, \ell)}{w_{-}(f)} \in \mathbb{Q}(f),$$
and for each $\sigma \in \operatorname{Gal}(\overline{\mathbb{Q}} / \mathbb{Q})$,
$$\left(\frac{\Lambda(f, m)}{\omega_{+}(f)}\right)^\sigma=\frac{\Lambda\left(f^\sigma, m\right)}{\omega_{+}\left(f^\sigma\right)}, \quad\left(\frac{\Lambda(f, \ell)}{\omega_{-}(f)}\right)^\sigma=\frac{\Lambda\left(f^\sigma, \ell\right)}{\omega_{-}\left(f^\sigma\right)}.$$
\end{cor}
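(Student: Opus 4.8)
The plan is to derive Corollary~\ref{rationality} from Theorem~\ref{rcb double} by the standard Rankin-Cohen/eigenform-projection argument, following \cite[Theorem 2.5]{CZ20} and its elliptic prototype in \cite{KZ84,Z77}. First I would recall that, since $h^+=1$ and the level is trivial, the space $M_{\mathbf{k}+\mathfrak l+2\mathbf{n}}(\Gamma_0(\mathcal O))$ has a model defined over $\mathbb Q$: the Fourier coefficients of the Eisenstein series $E_{\mathbf k}$ and $\tilde E_{\mathfrak l}$ lie (up to an elementary transcendental factor absorbed into the normalization) in $\mathbb Q$, and the Rankin-Cohen bracket, being a polynomial with rational coefficients in the derivatives of the inputs, sends rational-Fourier-coefficient forms to rational-Fourier-coefficient forms. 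Hence $[E_{\mathbf k},\tilde E_{\mathfrak l}]_{\mathbf n}$, suitably normalized, is a rational point of $S_m(\Gamma_0(\mathcal O))\oplus(\text{Eisenstein part})$, and its image under the Hecke projector $\pi_f$ onto the $f$-isotypic line is $\bigl(\text{rational number in }\mathbb Q(f)\bigr)\cdot f$, with the $\mathrm{Gal}(\overline{\mathbb Q}/\mathbb Q)$-action permuting the $f^\sigma$ compatibly — this is where the conjugation-equivariance in the statement comes from.

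Next I would combine this with Theorem~\ref{rcb double}: taking Petersson inner product of $[E_{\mathbf k},\tilde E_{\mathfrak l}]_{\mathbf n}$ against $f$ and using $\langle \pi_f h, f\rangle = (\text{coefficient})\langle f,f\rangle$ shows that the ratio
$$\frac{\Lambda(f,m-n-1)\,\Lambda(f,m-n-l)}{\langle f,f\rangle}$$
equals an element of $\mathbb Q(f)$ times explicit powers of $\pi$ and $i$ and a power of $\sqrt D$; after dividing by the appropriate period one gets an algebraic number, Galois-equivariant in $f$. Here one invokes the functional equation $\Lambda(f,s)=(-1)^k\Lambda(f,k-s)$ from the end of Section~5 to translate $\Lambda(f,m-n-1)$ and $\Lambda(f,m-n-l)$ into $\Lambda$ evaluated at the two critical arguments $s_1=l+n$ and $s_2=k+n$ (or their reflections), which as $k,l,n$ range over admissible values sweep out \emph{all} critical points $1\le j\le k-1$. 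The parity bookkeeping — $k,l$ even and $\Lambda(f,k-s)=(-1)^k\Lambda(f,s)$ forces the even-argument values to pair with $\omega_+$ and the odd-argument values with $\omega_-$ — is exactly the mechanism that produces the two periods $\omega_\pm(f)$ with $\langle f,f\rangle=\omega_+(f)\omega_-(f)$.

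The construction of the periods themselves proceeds by a now-classical bootstrapping: fix one even critical value, say $\Lambda(f,m_0)$ with $m_0$ even, and \emph{define} $\omega_+(f)$ to be $\Lambda(f,m_0)/\alpha$ for a suitable nonzero $\alpha\in\mathbb Q(f)$ extracted from the above identity (nonvanishing of the relevant $\Lambda$-value at one point follows from the product structure and the region of convergence / Euler product, or from \cite{S78} directly); then every other even critical value divided by $\omega_+(f)$ is in $\mathbb Q(f)$ by the same identity with the \emph{second} argument varied, and likewise $\omega_-(f):=\langle f,f\rangle/\omega_+(f)$ handles the odd critical values. The Galois-equivariance statements then follow because all the transcendental prefactors ($\pi^{\mathbf n}$, $(4\pi)^{\cdots}$, $\sqrt D$, $i^{\mathbf n}$) are the \emph{same} for $f$ and $f^\sigma$ and cancel in the ratios, while the rational factors transform via $\sigma$ by construction of the rational structure on the space of forms.

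The main obstacle I anticipate is \emph{not} the formal period-extraction step but rather making the rational-structure claim fully rigorous in the Hilbert setting: one must pin down that $\zeta_F(k)^{-1}E_{\mathbf k}$ and $\tilde E_{\mathfrak l}$ (the Dasgupta--Darmon--Pollack normalization, with $C$ depending only on $k,F$) have Fourier coefficients in $\mathbb Q$ up to a controlled power of $\sqrt D$ and $\pi$ \emph{with trivial central character and $h^+=1$}, so that the Rankin-Cohen bracket is genuinely $\mathbb Q(f)$-rational after the normalization in Theorem~\ref{rcb double}; one must also ensure that enough critical pairs $(m-n-1,\,m-n-l)$ are hit to cover all of $1\le j\le k-1$ in \emph{both} parities as $(k,l,n)$ vary over even $k,l\ge 4$ (resp.\ $\ge 6$ where needed) and $n\ge 1$ with $k+l+2n=m$ — a small combinatorial check that the admissible region is nonempty for each target $j$. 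Since the excerpt says "we sketch the proof here" and cites \cite{CZ20}, I would keep the write-up at the level of indicating these steps and referring to \cite{CZ20,S78,KZ84} for the details of the rationality of the Eisenstein Fourier coefficients and the nonvanishing input.
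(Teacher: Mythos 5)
Your proposal follows essentially the same route as the paper's proof: rationality of the Fourier coefficients of the (completed) double Eisenstein series via Theorem \ref{rcb double} together with the rational Fourier expansions of the two Eisenstein series and the functional equation of $\zeta_F$, then Shimura's results \cite[Prop.~4.15 and (4.16)]{S78} playing the role of your Hecke-projector/rational-structure argument, and finally periods anchored at critical values lying in the region of absolute convergence (this is where $k\ge 6$ supplies the nonvanishing you flag). The paper just makes your bootstrapping explicit by taking $\langle f, E^\ast_{k-1,2}(z;k-1)\rangle=\alpha_f\langle f,f\rangle=\Lambda(f,k-1)\Lambda(f,k-2)$ with $\alpha_f\in\mathbb{Q}(f)$ and setting $\omega_+(f)=\alpha_f\langle f,f\rangle/\Lambda(f,k-1)$, $\omega_-(f)=\langle f,f\rangle/\Lambda(f,k-2)$, so that both parities of critical values are handled directly by the even--odd pairing in the kernel identity.
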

\begin{proof}
We first show that $E^\ast_{m,k-m}(\cdot,l)$ has rational coefficients. By the functional equation of $\Lambda(f,s)$, it suffices to consider $1\leq l < m \leq \frac{k}{2}$. Theorem \ref{rcb double} tells us that $E^\ast_{m,k-m}(\cdot,l)$ equals $[E_{\mathbf{k-m}-\mathfrak{l}+1}, \tilde{E}_{\mathfrak{m-l}+1}]_{\mathfrak{l}-1}$ up to a constant. By substituting the Fourier expansion of Eisenstein series, $E^\ast_{m,k-m}(\cdot,l)$ have rational coefficients from \cite[Corollary 9.9]{N99} and the functional equation of $\zeta_F$. Similarly, $E^\ast_{k-1,2}(z ; k-1)$ also have rational coefficients. Then for primitive $f \in \mathcal{H}_k$, we have $\left\langle f, E^\ast_{k-1,2}(z ; k-1)\right\rangle=\alpha_f\langle f, f\rangle=\Lambda(f, k-1) \Lambda(f, k-2)$ for certain $\alpha_f \in \mathbb{Q}(f)$ by \cite[Proposition 4.15]{S78}. Define
$$\omega_{+}(f)=\frac{\alpha_f\langle f, f\rangle}{\Lambda(f, k-1)}, \quad \omega_{-}(f)=\frac{\langle f, f\rangle}{\Lambda(f, k-2)} .$$
Then $\omega_{+}(f) \omega_{-}(f)=$ $\langle f, f\rangle$. For even $m$, odd $\ell$ with $1 \leq m, \ell \leq k-1$, 
$\frac{\Lambda(f, m)}{\omega_{+}(f)}=\frac{\left\langle f, E_{m, k-m}(z ; k-1)\right\rangle}{\alpha_f\langle f, f\rangle} \in \mathbb{Q}(f)$
and similarly $\frac{\Lambda(f, \ell)}{\omega_{-}(f)} \in \mathbb{Q}(f)$.  Finally, the assertion (4.16) of \cite{S78} and that $\alpha_f^\sigma=\alpha_{f^\sigma}$ for each $\sigma \in \operatorname{Gal}(\overline{\mathbb{Q}} / \mathbb{Q})$ implies that
$$\left(\frac{\Lambda(f, m)}{\omega_{+}(f)}\right)^\sigma
=\left( \frac{\left\langle f, E_{m, k-m}(z ; k-1)\right\rangle}{\alpha_f\langle f, f\rangle} \right)^\sigma
=\frac{\Lambda\left(f^\sigma, m\right)}{\omega_{+}\left(f^\sigma\right)},$$
and similarly $\left(\frac{\Lambda(f, \ell)}{\omega_{-}(f)}\right)^\sigma
=\frac{\Lambda\left(f^\sigma, \ell\right)}{\omega_{-}\left(f^\sigma\right)}$.
\end{proof}

\noindent
\textbf{Acknowledgements } The authors are supported by a grant of National Natural Science Foundation of China (no. 12271123). The first named author would like to thank China Scholarship Council (no. 202206120153) for supporting him to visit TU Darmstadt as a joint-PhD student during the preparation of this paper.

\end{document}